\title{Cellular resolutions of powers of monomial ideals}
\author{Alexander Engstr\"om}
\address{Department of Mathematics, Aalto University, P.O. Box 11100, FI-00076 Aalto, Finland }
\email{alexander.engstrom@aalto.fi}
\author{Patrik Nor\'en}
\address{Department of Mathematics, Aalto University, P.O. Box 11100, FI-00076 Aalto, Finland }
\email{patrik.noren@aalto.fi}
\date{\today}
\theoremstyle{plain}
\newtheorem{theorem}{Theorem}[section]
\newtheorem{lemma}[theorem]{Lemma}
\newtheorem{proposition}[theorem]{Proposition}
\theoremstyle{definition}
\newtheorem{definition}[theorem]{Definition}
\theoremstyle{remark}
\newtheorem*{remark}{Remark}
\begin{document}

\begin{abstract}
There are many connections between the invariants of the different powers of an ideal. We investigate how to construct minimal resolutions for all powers at once using methods from algebraic and polyhedral topology with a focus on ideals arising from combinatorics. In one construction, we obtain cellular resolutions for all powers of edge ideals of bipartite graphs on $n$ vertices, supported by $(n-2)$--dimensional complexes. Our main result is an explicit minimal cellular resolution for all powers of edge ideals of paths. These cell complexes are constructed by first subdividing polyhedral complexes and then modifying them using discrete Morse theory. 
\end{abstract}

\maketitle

\section{Introduction}

The collection of all powers of an ideal contains many structures. In essence, most algebraic and homological properties stabilize after certain powers and can be derived from the smaller powers \cite{Ba, B, HH, HW, K, M, SVV}. In this paper, we study the resolutions of all powers of a monomial ideal at once. The basic philosophy is to cook up a minimal resolution that works for all powers. The monomial ideals of interest are constructed from combinatorial structures, which helps us to build the resolutions. We get the resolutions from cellular resolutions \cite{MS}, where the maps in the complex are just cellular boundary maps in a cell complex. These cell complexes are constructed from the same combinatorial data from which we define the ideals.

The cellular resolutions are constructed in several steps. First, we define polyhedral cell complexes that are very finely subdivided. For these cell complexes, it is easy to derive that they support cellular resolutions, since subcomplexes whose homology should vanish are convex. We then proceed by removing some systems of hyperplanes from the subdivision to get fewer cells. Then the subcomplexes are no longer convex, but we can use discrete Morse theory for cellular resolutions, as invented by Batzies and Welker \cite{BW}, to carry over the results of vanishing homology. So far, these resolutions are still supported by polyhedral complexes obtained by subdividing a simplex. In the next step, we turn the cellular resolutions minimal by another round of discrete Morse theory. These minimal cellular resolutions are no longer, but almost, polyhedral: the subdivided simplex only has non-polyhedral cells close to the boundary and, for large powers, most of the cellular complex is merely a subdivided simplex.

Along the way, we provide, in Proposition \ref{prop:base}, an $(n-2)$--dimensional cellular resolution for all powers of edge ideals of bipartite graphs on $n$ vertices. We then proceed to our main result in Theorem \ref{thm:main}, an explicit minimal cellular resolution of all powers of edge ideals of paths.

\section{Preliminaries}

\subsection{Monomial ideals}

The ideals resolved in this paper are monomial. In particular, we focus on those constructed from graphs. The \emph{edge ideal} of a graph $G$ with vertex set $V(G)$ and edge set $E(G)$ is the monomial ideal $\langle x_ux_v \mid uv \in E(G)  \rangle$ in $\mathrm{\bf k}[x_v \mid v\in V(G)].$ Our main example of graphs are paths. The \emph{$n$-path} $P_n$ has vertices $1,2,\ldots,n$ and edges $12,23,\ldots,(n-1)n.$

\subsection{Monomial labeling of polyhedral complexes}

\begin{definition}
Let $X$ be a cell complex. A \emph{monomial labeling of $X$} is a map $\ell$ from the set of cells of $X$ to the set of monomials in $\mathbf{k}[x_1,\ldots,x_n]$. The map $\ell$ is required to satisfy $\ell(\sigma)=\textrm{lcm}\{\ell(v)\mid  \textrm{$v$ is a vertex of $\sigma$}\}$. The $x_i$--degree of $\ell(v)$ is denoted $\ell_i(v).$
\end{definition}

A \emph{lattice point} $\alpha$ of $\mathbb{R}^n$ is a point $\alpha \in \mathbb{Z}^n.$ The \emph{monomial label} of a lattice point $\alpha$ in $\mathbb{R}_{\geq 0}^n$ is $x^\alpha.$

\begin{definition}
Let $X$ be a cell complex geometrically realized in $\mathbb{R}_{\geq 0}^n$ whose vertices are lattice points and $\ell$ be a monomial labeling giving vertices the monomial labels of their geometrical realizations. Then $\ell$ is a \emph{monomial labeling induced by the coordinates}. 
\end{definition}

In this paper, \emph{all} monomial labelings are induced by the coordinates unless stated otherwise.

\subsection{Subdivisions of Newton Polytopes}

\begin{definition}
The \emph{Newton polytope} of a monomial ideal $I$ in $\mathrm{\bf k}[x_1,\ldots x_n]$ is the polytope $\textrm{Newt}(I)$ in $\mathbb{R}^n$ spanned by the exponent vectors of the generators of $I.$
\end{definition}

Polytopes whose vertices are lattice points are \emph{lattice polytopes} and the Newton polytopes are examples of these. The \emph{$d$--dilation} of a polytope $P$ is a polytope $dP$ given by multiplying the vertex vectors of $P$ by $d.$ 

\begin{definition} Let $P$ be a lattice polytope in $\mathbb{R}^n$ with lattice points $v_1,v_2, \ldots ,v_t.$ If for all positive integers $d$, every point $p\in dP \cap \mathbb{Z}^n$ can be expressed as
$p=n_1v_1+\cdots n_tv_t$ with all $n_i \in \mathbb{Z}_{\geq 0}$, then $P$ is a \emph{normal} polytope.
\end{definition}

\begin{proposition}\label{prop:semiHibi}
If $G$ is a bipartite graph, then for all positive integers $d,$
\begin{itemize}
\item[(i)] the polytopes $\textrm{\emph{Newt}}(I_G^d)$ and $d\cdot \textrm{\emph{Newt}}(I_G)$ are the same,
\item[(ii)] the monomial labels of the lattice points of $\textrm{\emph{Newt}}(I_G^d)$ is its set of minimal generators, and
\item[(iii)] a subdivision of $\textrm{\emph{Newt}}(I_G^d)$ by integral translates of coordinate hyperplanes has only lattice points as vertices.
\end{itemize}
\end{proposition}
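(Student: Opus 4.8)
The plan is to reduce everything to a single structural fact about bipartite edge ideals: the generators $x_ux_v$ for $uv\in E(G)$ are squarefree of degree two, and the associated exponent vectors are exactly the characteristic vectors $e_u+e_v$ of edges. The Newton polytope $\mathrm{Newt}(I_G)$ is thus the edge polytope of $G$. The decisive input is a classical result of Ohsugi--Hibi (equivalently the Simis--Vasconcelos--Villarreal circle of ideas): the edge polytope of a graph is normal precisely when the graph has no odd cycles other than those already accounted for, and bipartite graphs satisfy this unconditionally. So first I would recall that $\mathrm{Newt}(I_G)$ is a $0/1$-polytope sitting inside the hyperplane $\{\sum_{i} x_i = 2\}$, and that normality of this polytope is exactly the statement that its defining inequalities are totally unimodular or, more directly, that for bipartite $G$ the toric ideal of the edge ring is generated by the even-cycle relations and the polytope is normal.

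For part (i), I would argue that the containment $d\cdot\mathrm{Newt}(I_G)\subseteq\mathrm{Newt}(I_G^d)$ is automatic, since any vertex of $\mathrm{Newt}(I_G)$ scaled by $d$ is the exponent vector of a $d$-th power of a generator. The reverse containment is where I'd invoke normality: a generator of $I_G^d$ is a product of $d$ edge-monomials, so its exponent vector is a sum of $d$ vectors $e_u+e_v$, hence lies in $d\cdot\mathrm{Newt}(I_G)$; conversely every lattice point of $d\cdot\mathrm{Newt}(I_G)$ decomposes as such a sum by normality, giving a generator of $I_G^d$. This simultaneously sets up part (ii): I would show that the lattice points of $\mathrm{Newt}(I_G^d)=d\cdot\mathrm{Newt}(I_G)$ are in bijection with the minimal generators of $I_G^d$, using that a lattice point decomposing into $d$ edge-vectors is a generator, and that non-minimality would force one monomial to divide another, contradicting that both are lattice points at the same total degree $2d$ in the dilated polytope. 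The key is that all lattice points lie on the boundary facet $\{\sum x_i = 2d\}$, so no monomial strictly divides another of the same degree.

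For part (iii), I would observe that the vertices created by slicing $\mathrm{Newt}(I_G^d)$ with integer-translated coordinate hyperplanes are intersections of the polytope with such hyperplanes, and I must show these intersection points are integral. Here I expect the totally unimodular structure of the edge polytope of a bipartite graph to do the work: the facet-defining inequalities together with the coordinate hyperplanes form a totally unimodular system (bipartiteness is exactly what makes the incidence-type constraint matrix TU), so every vertex of the refined arrangement is integral by Cramer's rule. Alternatively, I would argue directly that normality already guarantees that the subdivision has the integer-decomposition property, forcing new $0$-cells to be lattice points.

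The main obstacle I anticipate is part (iii): parts (i) and (ii) follow fairly mechanically once normality of the bipartite edge polytope is granted, but showing that coordinate-hyperplane cuts produce \emph{only} lattice vertices requires genuine control of the facet structure. The cleanest route is likely to combine total unimodularity of the edge incidence matrix of a bipartite graph with the fact that the polytope lives in the affine hyperplane $\{\sum x_i = 2d\}$; I would need to verify carefully that augmenting the constraint system by coordinate hyperplanes preserves the property that all vertices of the resulting cell complex are integral, rather than merely half-integral, and bipartiteness is precisely the hypothesis that rules out the half-integral vertices an odd cycle would introduce.
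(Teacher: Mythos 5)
Your treatment of (i) and (ii) matches the paper's: (i) is the elementary identity $\mathrm{conv}(S+\cdots+S)=d\cdot\mathrm{conv}(S)$ (note that normality is not needed for the polytope equality itself, only for (ii)), and (ii) is exactly the paper's appeal to normality of bipartite edge polytopes via Ohsugi--Hibi. The minimal-generator point you add (all exponent vectors lie on $\sum x_i = 2d$, so no divisibility among them) is fine.

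Part (iii) is where your proposal has a genuine gap, and you have correctly identified it as the hard part without closing it. Two problems. First, your fallback argument is false: normality does not force hyperplane sections to have integral vertices. The segment from $(0,0)$ to $(2,1)$ is a normal lattice polytope, but slicing by $x_1=1$ produces the vertex $(1,\tfrac12)$. So the integer-decomposition property cannot substitute for an analysis of the slicing. Second, your main argument asserts total unimodularity of ``the facet-defining inequalities together with the coordinate hyperplanes,'' but you never identify the facets, and they are not all coordinate hyperplanes: already for $G=P_4$ the edge polytope $\mathrm{conv}\{(1,1,0,0),(0,1,1,0),(0,0,1,1)\}$ has the facet $x_2+x_3\ge 1$. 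Proving TU of the full facet system is a nontrivial unverified claim. What actually makes the argument work --- and what the paper does --- is a different reduction: a vertex of the subdivision lies in the relative interior of a unique face of the polytope, every face of $\mathrm{Newt}(I_G^d)$ is $\mathrm{Newt}(I_H^d)$ for a subgraph $H$ (it is the convex hull of a subset of the edge vectors), so one inducts on edges and only has to handle interior vertices. For those, the relevant constraint system is not the incidence matrix but the \emph{affine hull} of the polytope, which by Ohsugi--Hibi Proposition 1.3 consists of the two equations $\sum_{v\in A}\alpha_v=d$ and $\sum_{v\in B}\alpha_v=d$ (one pair per component); combined with coordinate equations $\alpha_v=i_v$ this system solves integrally --- the paper does it by direct elimination, and one could equally invoke TU of this particular matrix. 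Bipartiteness enters precisely through this codimension-two affine hull with one equation per part; your single hyperplane $\sum x_i=2d$ is not the right input. So the TU instinct is sound, but without the face-induction and the affine-hull description the argument does not go through as written.
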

\begin{proof}
The mixed powers in the ideal become linear combinations in the Newton polytope. This establishes (i).

Every generator of $I_G^d$ is a monomial label of a lattice point of $\textrm{Newt}(I_G^d).$ To get the converse, we need that every lattice point can be written as an integer weight combination of the vertices of $\textrm{Newt}(I_G),$ or equivalently, that  $\textrm{Newt}(I_G)$ is normal. According to Corollary 2.3 of \cite{hibi}, 
$\textrm{Newt}(I_G)$ is normal for a class of graphs including the bipartite ones. This proves (ii).

The statement (iii) builds on Proposition 1.3 in \cite{hibi}. According to that, 
the codimension of $\textrm{Newt}(I_G)$ is two if $G$ is a connected bipartite graph 
and the two equalities cutting it out are given by $\sum_{v\in A} \alpha_v = 1$ and $\sum_{v\in B} \alpha_v = 1$ where $A$ and $B$ are the two parts of the graph.

We may assume that $G$ does not contain isolated vertices. Our proof of (iii) goes by induction on the number of edges of $G.$ The base case is clear. 

If $G$ is not connected, then we get (iii) by considering the connected components separately. For connected $G,$ we only have to consider interior vertices of the subdivision of
$\textrm{{Newt}}(I_G^d)$ because  boundary vertices are on faces that are themselves Newton polytopes of some $I_H^d,$ where $H$ is obtained from $G$ by deleting edges. And for $H$ we are already done by induction.

We are left with showing that every point given by intersecting the hyperplanes\linebreak $\sum_{v\in A} \alpha_v = d$ and $\sum_{v\in B} \alpha_v = d$ with integer translates of coordinate hyperplanes is a lattice point. Every minimal set of such hyperplanes needs to be of the form $\alpha_v=i_v\in \mathbb{Z}$ for $v\in A'\cup B'$ where $A' \subset A,$ $B' \subset B,$ and $|A \setminus A'| = |B \setminus B'| = 1.$ This turns the two original hyperplanes into $\alpha_{v_A}=d- \sum_{v\in A'} \alpha_v = d- \sum_{v\in A'} i_v$ and $\alpha_{v_B}=d- \sum_{v\in B'} \alpha_v = d- \sum_{v\in B'} i_v$ where $\{v_A\}=A \setminus A'$ and $\{v_B\}=B \setminus B',$ which shows that the point is in fact a lattice point.
\end{proof}

\section{Four polyhedral complexes to build cellular resolutions}

\begin{definition} \label{def:complexes}
We define three $(n-2)$-dimensional polyhedral complexes embedded in $\mathbb{R}^{n}.$ They are all subdivisions of $\textrm{Newt}(I_{P_n}^d).$
\begin{itemize}
\item[1)] For integers $0\leq i \leq n$ and $0 \leq j,$ define the hyperplanes
\[
H_{i,j} = \left\{ \mathbf{y} \in \mathbb{R}^n \left|  y_i  = j   \right.  \right\}
\]
and subdivide  $\textrm{Newt}(I_{P_n}^d)$ by all $H_{i,j}$ to get $W_n^d.$
\item[2)] For integers $0\leq i \leq n$ and $0 \leq j,$ define the hyperplanes
\[
H'_{i,j}=\left\{ \mathbf{y} \in \mathbb{R}^n \left|    \sum_{k=0}^{  \lfloor (i-1)/2 \rfloor }  y_{i-2k} = j   \right.  \right\}
\]
and subdivide  $\textrm{Newt}(I_{P_n}^d)$ by all $H'_{i,j}$ to get $Y_n^d.$
\item[3)] The common refinement of  $\textrm{Newt}(I_{P_n}^d)$ by subdividing by both $H_{i,j}$ and $H'_{i,j}$ is $Z^d_n.$
\end{itemize}
\end{definition}

The following polyhedral complex was defined by Dochtermann and Engstr\"om in Definition 3.1 and Section 5 of \cite{dochtermannEngstrom2012} and was employed to find cellular resolutions of cointerval ideals.
\begin{definition}\label{def:homComplex}
For positive integers $n$ and $d$, start with the $(n-d)$-simplex in $\mathbb{R}^{n-d+1}$ spanned by $d\textrm{\bf e}_1,\ldots,d\textrm{\bf e}_{n-d+1}$ and then subdivide the simplex by the hyperplanes defined by\linebreak $\mathbf{y}\cdot (\mathbf{e}_1 + \cdots + \mathbf{e}_i) = j $ for all integers $i$ and $j.$  This is a geometric realization of  $X_{d,n},$ a subcomplex of $\prod_{i=1}^d \Delta_n,$ where $\Delta_n$ is a simplex with vertex set $1,2,\ldots, n.$ It is the induced subcomplex on vertices $(a_1,\ldots, a_d)$ satisfying $a_1<a_2<\ldots <a_d.$ In this geometric realization, the vertex $(a_1,\ldots, a_d)$ is realized as $\sum_{i=1}^d \mathbf{e}_{a_i-i+1}.$
\end{definition}

\begin{proposition}\label{prop:realiz}
The polyhedral complex $Y_n^d$ is a geometric realization of $X_{d,d+n-2}$.
\end{proposition}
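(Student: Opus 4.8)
The plan is to produce an explicit affine isomorphism $\phi \colon \mathbb{R}^n \to \mathbb{R}^{n-1}$ carrying the subdivided Newton polytope $Y_n^d$ onto the subdivided simplex realizing $X_{d,d+n-2}$ from Definition \ref{def:homComplex}, matching polytopes, subdividing hyperplanes, and lattice points. The map I would use records, for each $\mathbf{y}$, the unique way of writing it as a nonnegative combination of the edge vectors of $P_n$. Concretely, writing $S_i(\mathbf{y}) = \sum_{k=0}^{\lfloor (i-1)/2\rfloor} y_{i-2k}$ for the linear form cut out by $H'_{i,j}$ and setting $S_0 = 0$, I define $\phi(\mathbf{y}) = (f_1, \ldots, f_{n-1})$ with $f_i = S_i - S_{i-1}$.

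First I would identify the Newton polytope combinatorially. By Proposition \ref{prop:semiHibi}(i), $\textrm{Newt}(I_{P_n}^d) = d\cdot\textrm{Newt}(I_{P_n})$, and the latter is the convex hull of the edge vectors $\mathbf{e}_i + \mathbf{e}_{i+1}$ for $1 \le i \le n-1$; hence $\mathbf{y}$ lies in $\textrm{Newt}(I_{P_n}^d)$ exactly when $\mathbf{y} = \sum_{i=1}^{n-1} c_i(\mathbf{e}_i+\mathbf{e}_{i+1})$ for some $c_i \ge 0$ with $\sum_i c_i = d$. The equations $c_{i-1} + c_i = y_i$ force $c_i = f_i(\mathbf{y})$, so this representation is unique and $\textrm{Newt}(I_{P_n}^d)$ is precisely the region where all $f_i \ge 0$ and $\sum_i f_i = d$ (this last condition reproduces the two equal part-sum equations of Proposition \ref{prop:semiHibi}(iii), since each $f_i$ contributes equally to the two parts). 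Consequently $\phi$ maps $\textrm{Newt}(I_{P_n}^d)$ bijectively onto $\{\mathbf{z}\ge 0 : \sum_i z_i = d\}$, the simplex spanned by $d\mathbf{e}_1,\ldots,d\mathbf{e}_{n-1}$.

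Next I would match the cell structures. Since $\sum_{m=1}^i f_m = S_i$, the subdividing form $S_i$ equals the cumulative sum $z_1 + \cdots + z_i$ on the target, so $\phi$ sends each $H'_{i,j}$ to $\mathbf{z}\cdot(\mathbf{e}_1+\cdots+\mathbf{e}_i) = j$, the arrangement subdividing the simplex in Definition \ref{def:homComplex}; the forms $S_{n-1}, S_n$ are constant and contribute nothing, matching the effective range $i \le n-2$. Because $\phi$ restricts to an affine isomorphism of the two affine hulls (its kernel is $\mathbb{R}\mathbf{e}_n$, transverse to the affine hull of $\textrm{Newt}(I_{P_n}^d)$), it carries the induced polyhedral subdivision to the induced polyhedral subdivision cell by cell. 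Finally I would verify the lattice-point correspondence: $\phi$ has integer coefficients, and conversely $y_i = S_i - S_{i-2}$ recovers an integral preimage from integral cumulative sums, so $\phi$ is a bijection on lattice vertices; tracking these through $c_j = \#\{t : a_t - t + 1 = j\}$ recovers exactly the labels $(a_1 < \cdots < a_d)$ of $X_{d,d+n-2}$ and their realizations $\sum_t \mathbf{e}_{a_t-t+1}$.

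The step I expect to be the main obstacle is the first one: pinning down $\textrm{Newt}(I_{P_n}^d)$ as the region $\{f_i \ge 0,\ \sum_i f_i = d\}$, i.e. showing that nonnegativity of the alternating partial sums is both necessary and sufficient for membership, since everything else is bookkeeping once the change of coordinates $f_i = S_i - S_{i-1}$ is in hand. Uniqueness of the edge decomposition, forced by the recursion $c_{i-1}+c_i = y_i$, is what makes this characterization exact, and normality from Proposition \ref{prop:semiHibi}(ii) guarantees that the lattice points of the polytope are genuinely the minimal generators, so the vertex identification respects the monomial labels and not merely the underlying point sets.
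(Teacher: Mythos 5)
Your proof is correct and is essentially the paper's argument run in reverse: the paper uses the linear map $\mathbf{e}_i \mapsto \mathbf{e}_i + \mathbf{e}_{i+1}$ from the subdivided simplex onto $Y_n^d$ and notes that it carries $y_1+\cdots+y_i=j$ to $H'_{i,j}$, while your $\phi$ with $f_i = S_i - S_{i-1}$ is exactly the inverse of that map on the affine hull. You simply verify more of the bookkeeping (polytope identification, transversality of the kernel, lattice points) that the paper leaves implicit.
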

\begin{proof}
The linear transformation defined by sending $\mathbf{e}_i$ to  $\mathbf{e}_i+\mathbf{e}_{i+1}$ for all $i$ sends $X_{d,d+n-2}$ to $Y_n^d,$ since the vertices of the underlying simplices are sent to each other and the hyperplane $y_1+\cdots+y_i=j$ is sent to $ \sum_{k=0}^{  \lfloor (i-1)/2 \rfloor }  y_{i-2k} = j .$
\end{proof}

By the linear map, all vertices of  $Y_n^d$ are lattice points.  The vertex realized as
\[ (\mathbf{e}_{a_1} + \mathbf{e}_{a_1+1} )+  (\mathbf{e}_{a_2} + \mathbf{e}_{a_2+1} ) + \cdots + ( \mathbf{e}_{a_d} + \mathbf{e}_{a_d+1}) \]
where $a_1 \leq a_2 \leq \cdots \leq a_d$ is $(a_1,a_2+1,\ldots, a_d+d-1)$ in the definition of  $X_{d,d+n-2}$ as a polyhedral complex.

\section{The complex $Z^d_n$ supports a cellular resolution of $I_{P_n}^d$}

\begin{definition}
Let $X$ be a cell complex with monomial labeling $\ell$ and let $\alpha$ be a monomial. The complex $X_{\le \alpha}$ is the subcomplex of $X$ consisting of all cells $\sigma$ for which $\ell(\sigma)$ divides $\alpha$.
\end{definition}

The following theorem establishes which labeled complexes support resolutions. We are not in the restricted setting of \cite{MS} where all cell complexes are polyhedral, so we need the generality of \cite{BW} in which all details can be found.

\begin{theorem}
Let $X$ be a cell complex with monomial labeling $\ell$. If $X_{\leq \alpha}$ is acyclic or empty for all $\alpha$, then $X$ supports a cellular resolution of the ideal $\langle \ell(v)\mid \textrm{$v$ is a vertex of $X$}\rangle$. Moreover, if no cells properly included in each other have the same label, then the resolution is minimal.
\end{theorem}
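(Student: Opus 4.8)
The plan is to recast the topological hypothesis as the exactness of a single $\mathbb{Z}^n$-graded complex of free modules and then extract both assertions degree by degree. First I would attach to $X$ its cellular chain complex, converted into a complex $\mathcal{F}_X$ of free $S = \mathbf{k}[x_1,\ldots,x_n]$-modules: each $i$-cell $\sigma$ contributes a basis element in homological degree $i+1$ of internal degree $\deg\ell(\sigma)$, with $\mathcal{F}_0 = S$ and augmentation sending the generator of a vertex $v$ to the monomial $\ell(v)$, so that $H_0$ is a quotient of $S$ by the ideal $I$ generated by the vertex labels. The differential is the monomially weighted cellular boundary: a codimension-one incidence $\tau\subset\sigma$ contributes the topological incidence number times $\ell(\sigma)/\ell(\tau)$. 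The lcm condition in the definition of $\ell$ forces $\ell(\tau)\mid\ell(\sigma)$, so these quotients are honest monomials and the weighting preserves $\partial^2=0$.

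Next I would invoke the fact that a $\mathbb{Z}^n$-graded complex is exact precisely when each graded strand is exact, and identify the strands explicitly. For fixed $\alpha$, the generator of a cell $\sigma$ survives into internal degree $\alpha$ exactly when $\ell(\sigma)\mid x^\alpha$, that is, when $\sigma\in X_{\le\alpha}$, and in that single degree every monomial weight specializes to a nonzero scalar. Hence $(\mathcal{F}_X)_\alpha$ is, up to a shift of one in homological degree, the augmented cellular chain complex $\tilde C_\bullet(X_{\le\alpha};\mathbf{k})$, giving $H_i(\mathcal{F}_X)_\alpha\cong \tilde H_{i-1}(X_{\le\alpha};\mathbf{k})$. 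When $X_{\le\alpha}$ is acyclic all reduced homology vanishes, so $H_i(\mathcal{F}_X)_\alpha=0$ for every $i$; when $X_{\le\alpha}$ is empty the only surviving class is $\tilde H_{-1}(\emptyset;\mathbf{k})=\mathbf{k}$, which matches $(S/I)_\alpha$ since then $x^\alpha\notin I$. Assembling over all $\alpha$ yields $H_i(\mathcal{F}_X)=0$ for $i\ge 1$ and $H_0(\mathcal{F}_X)=S/I$, so $\mathcal{F}_X$ is a cellular resolution of $I$.

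For the minimality clause I would use that a $\mathbb{Z}^n$-graded free resolution is minimal exactly when no entry of any differential is a unit. The only nonzero entries are the weights $\pm\,\ell(\sigma)/\ell(\tau)$ across codimension-one incidences, and such an entry is a unit precisely when $\ell(\sigma)=\ell(\tau)$. Since $\tau\subsetneq\sigma$, the hypothesis that no properly nested cells carry the same label forces every entry into the maximal ideal $\mathfrak{m}=\langle x_1,\ldots,x_n\rangle$, whence minimality.

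The step I expect to be the main obstacle is leaving the polyhedral world: for a general CW complex the incidence numbers may be arbitrary integers and a cell's boundary is not governed by a polytope's face lattice, so one must check both that the quotients $\ell(\sigma)/\ell(\tau)$ are well defined and that restricting $\mathcal{F}_X$ to internal degree $\alpha$ reproduces the cellular \emph{boundary} of $X_{\le\alpha}$ and not merely its chain groups. This is exactly where the generality of Batzies and Welker, rather than that of Miller and Sturmfels, is indispensable, and where I would rely on their verification that the weighted cellular differential is internally consistent.
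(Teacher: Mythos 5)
Your argument is correct and is essentially the standard graded-strand proof: the paper itself gives no proof of this theorem, delegating all details to Batzies--Welker \cite{BW}, and your reduction of exactness in each multidegree $\alpha$ to the vanishing of $\tilde H_{*}(X_{\le\alpha};\mathbf{k})$, together with the unit-entry criterion for minimality, is exactly the argument carried out there (and in Miller--Sturmfels for the polyhedral case). The one point you flag as an obstacle, namely the internal consistency of the weighted differential for non-regular CW complexes, is precisely the content the paper outsources to \cite{BW}, so your deferral matches the paper's own treatment.
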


Newton polytopes can be used to construct cellular resolutions.

\begin{proposition}\label{prop:base}
Let $I$ be a monomial ideal in $\mathbf{k}[x_1,\ldots,x_n]$ whose Newton polytope $P$ is normal and,
for some positive integer $d$,
let $X$ be the subdivision of $dP$ by integer translations of the coordinate hyperplanes. 
If the vertices of $X$ are the lattice points of $dP,$ then it supports a cellular resolution of $I^d$
with the monomial labeling given by the coordinates.
\end{proposition}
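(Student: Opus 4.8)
The plan is to apply the criterion from the theorem stated just before this proposition. That theorem says $X$ supports a cellular resolution of $\langle \ell(v) \mid v \text{ a vertex of } X\rangle$ provided $X_{\le \alpha}$ is acyclic or empty for every monomial $\alpha$. So the proof has two parts: first, identify the generating set $\langle \ell(v)\rangle$ with $I^d$; second, verify the acyclicity condition for the subcomplexes $X_{\le\alpha}$.

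For the first part, I would use normality of $P$. The vertices of $X$ are exactly the lattice points of $dP$, and their labels under the coordinate-induced labeling are the monomials $x^\beta$ for $\beta \in dP \cap \mathbb{Z}^n$. Normality says every such $\beta$ is a nonnegative integer combination of the lattice points $v_1,\dots,v_t$ of $P$, i.e. $x^\beta$ is a product of $d$ generators of $I$ (after checking the combination uses exactly $d$ of them, which follows since $P$ and hence $dP$ sits in an affine hyperplane counting total degree — for edge ideals every generator has degree two, so a point of $dP$ decomposes into exactly $d$ lattice points of $P$). Conversely every minimal generator of $I^d$ is visibly the label of a lattice point of $dP$. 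Hence $\langle \ell(v)\rangle = I^d$, and in fact each vertex label is a minimal generator.

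For the second part — the heart of the matter — I need $X_{\le\alpha}$ acyclic or empty for each $\alpha$. Since the labeling is induced by coordinates, a cell $\sigma$ has $\ell(\sigma) = \mathrm{lcm}$ of its vertex labels, which geometrically is the coordinatewise maximum $\max_i$ over the lattice points of $\sigma$. Thus $\ell(\sigma)$ divides $x^\alpha$ exactly when every vertex of $\sigma$ has all coordinates $\le \alpha$, i.e. when $\sigma$ lies in the box $B_\alpha = \{\mathbf{y} : 0 \le y_i \le \alpha_i\}$. The key geometric observation is therefore that $X_{\le\alpha}$ is precisely the part of the subdivided complex $X$ lying inside the \emph{convex} region $dP \cap B_\alpha$. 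Because $X$ is cut out by integer coordinate hyperplanes and $B_\alpha$ is an intersection of such hyperplanes, $X_{\le\alpha}$ is a genuine polyhedral subcomplex of $X$ whose union is the polytope $dP \cap B_\alpha$.

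The \textbf{main obstacle} is converting ``the underlying space is convex'' into ``the subcomplex is acyclic.'' My plan is that $X_{\le\alpha}$, when nonempty, is a regular CW (in fact polyhedral) decomposition of the convex polytope $dP \cap B_\alpha$, which is contractible; a polyhedral complex triangulating a convex body is contractible and hence acyclic. The one subtlety I would watch is the requirement that the vertices of $X_{\le\alpha}$ are lattice points (so that the subcomplex really is induced by vertices whose labels divide $\alpha$ and no cell is missing); but this is guaranteed by the hypothesis that all vertices of $X$ are lattice points of $dP$, together with the fact that $B_\alpha$ has lattice facets, so no new non-lattice vertices are introduced on the boundary. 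With acyclicity in hand, the theorem applies and $X$ supports a cellular resolution of $I^d$.
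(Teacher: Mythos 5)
Your proposal is correct and follows essentially the same route as the paper: identify the vertex labels with the generators of $I^d$ via normality of $P$, then observe that $X_{\le\alpha}$ is exactly the polyhedral decomposition of the convex region $dP\cap\{\mathbf{y}\mid \mathbf{y}\le\alpha\}$ induced by the coordinate-hyperplane subdivision, hence acyclic when nonempty. The extra care you take (checking that the decomposition of a lattice point of $dP$ uses exactly $d$ generators, and that no cell of $X$ straddles the box) fills in details the paper leaves implicit, but does not change the argument.
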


\begin{proof}
The monomials given by the lattice points of $dP$ generate $I^d$ since $P$ is normal. If $X$ supports a cellular resolution, then it does for $I^d.$  We now show that $X_{\leq \alpha}$ is acyclic or empty for all $\alpha \in \mathbb{Z}_{\geq 0}^n.$ The subcomplex $X_{\leq \alpha}$ is 
$dP\cap\{\mathbf{y}\in\mathbb{R}^n\mid \mathbf{y}\le \alpha  \},$
since a cell is in $X_{\leq \alpha}$ if all of its vertices are contained in $\{\mathbf{y}\in\mathbb{R}^n\mid \mathbf{y}\le \alpha \}$. If $X_{\leq \alpha}$ is non-empty, then it is convex and acyclic.
\end{proof}

\begin{proposition}\label{prop:prev}
Let $G$ be a bipartite graph. Then subdividing $d \cdot \textrm{\emph{Newt}}(I_G)$ by all integer translates of the coordinate hyperplanes, and labeling by coordinates, gives a complex supporting a cellular resolution of $I_G^d.$
\end{proposition}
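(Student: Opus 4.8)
The plan is to recognize this proposition as a direct instance of Proposition \ref{prop:base}, applied to the ideal $I = I_G$ and the polytope $P = \textrm{Newt}(I_G)$. To invoke that proposition I must verify its two hypotheses: that $P$ is normal, and that the vertex set of the subdivision $X$ of $dP$ by integer translates of the coordinate hyperplanes coincides with the set of lattice points of $dP$. Once these are in place, Proposition \ref{prop:base} immediately delivers that $X$, with the coordinate-induced labeling, supports a cellular resolution of $I^d = I_G^d$.

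Normality of $P$ is already available and requires no new work: the proof of Proposition \ref{prop:semiHibi}(ii) invokes Corollary 2.3 of \cite{hibi} to conclude that $\textrm{Newt}(I_G)$ is normal whenever $G$ is bipartite, which is precisely our standing assumption. This also handles the disconnected case, so no connectivity hypothesis is needed.

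For the second hypothesis I would argue the two inclusions separately. That every vertex of $X$ is a lattice point of $dP$ is exactly Proposition \ref{prop:semiHibi}(iii), after noting that $dP = \textrm{Newt}(I_G^d)$ by part (i), so that $X$ is indeed a subdivision of the Newton polytope of $I_G^d$ by coordinate hyperplanes. The reverse inclusion is elementary: if $\alpha \in dP \cap \mathbb{Z}^n$, then $\alpha_i \in \mathbb{Z}$ for every $i$, so $\alpha$ lies on the subdividing hyperplane $\{y_i = \alpha_i\}$ for each coordinate $i$. The unique cell of $X$ whose relative interior contains $\alpha$ lies on exactly those subdividing hyperplanes passing through $\alpha$, and is contained in $dP$; hence that cell is contained in $dP \cap \bigcap_i \{y_i = \alpha_i\} = \{\alpha\}$. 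Thus $\{\alpha\}$ is a $0$-cell, i.e. $\alpha$ is a vertex of $X$, and the two vertex sets agree.

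With both hypotheses checked, Proposition \ref{prop:base} yields that $X$ supports a cellular resolution of $I_G^d$, whose vertex labels are the generators of $I_G^d$ (consistent with Proposition \ref{prop:semiHibi}(ii)). I expect essentially no obstacle here: the only non-formal ingredient is the normality of $\textrm{Newt}(I_G)$, already borrowed from \cite{hibi} in the earlier proposition. The one point worth stating with care is the reverse inclusion above, since Proposition \ref{prop:semiHibi}(iii) supplies only that vertices are lattice points and not the converse that every lattice point is realized as a $0$-cell.
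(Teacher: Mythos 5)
Your proposal is correct and follows exactly the paper's route: the paper's entire proof is ``This follows directly from Propositions~\ref{prop:semiHibi} and~\ref{prop:base}.'' You have simply spelled out the verification of the two hypotheses (normality, and the identification of the vertices of the subdivision with the lattice points of $d\cdot\textrm{Newt}(I_G)$) that the paper leaves implicit, including the easy reverse inclusion that the paper does not comment on.
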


\begin{proof}
This follows directly from Propositions~\ref{prop:semiHibi} and~\ref{prop:base}.
\end{proof}
\begin{remark}
It follows from Theorem 5.9 in \cite{SVV}, as explained in Lemma 2.6 of \cite{M}, that the dimension $n-2$ is minimal for high powers.
\end{remark}

Note, in particular, this shows that $W^d_n$ supports a cellular resolution of $I_{P_n}^d.$

\begin{definition}
A zero/one--matrix  $A$ has the \emph{consecutive-ones} property if it can be permuted into a matrix such that on each row the ones occur consecutively.
\end{definition}

\begin{proposition}\label{prop:zero}
If a zero/one--matrix $A$ has the consecutive-ones property and is invertible, then $A^{-1}$ is an integer matrix.
\end{proposition}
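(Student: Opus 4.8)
The plan is to reduce everything to the single claim that $\det A = \pm 1$. This suffices: for an invertible integer matrix $A$ we have $A^{-1} = (\det A)^{-1}\operatorname{adj}(A)$, and the adjugate $\operatorname{adj}(A)$ already has integer entries, since its entries are $\pm$ minors of the integer matrix $A$; hence $\det A = \pm 1$ forces $A^{-1}$ to be integral. Moreover, permuting the columns of $A$ multiplies the determinant by $\pm 1$ and changes neither invertibility nor the integrality of the inverse, so I may assume from the outset that $A$ is already in consecutive-ones form, i.e. the ones in each row occupy an interval of columns.

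The key step is a single unimodular column operation that sparsifies the matrix. Let $T$ be the $n\times n$ upper-bidiagonal integer matrix with $T_{kk}=1$ for all $k$, with $T_{k-1,k}=-1$ for $2\le k\le n$, and with all other entries $0$. Then $\det T = 1$, so $\det(AT) = \det A$, and it remains to compute $\det(AT)$. Writing row $i$ of $A$ as $(a_{i1},\dots,a_{in})$ and setting $a_{i0}=0$, the entry $(AT)_{ik}$ equals $a_{ik}-a_{i,k-1}$. If the ones in row $i$ form the interval of columns $[l,r]$, this difference equals $+1$ at $k=l$, equals $-1$ at $k=r+1$ (when $r<n$), and equals $0$ otherwise. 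Hence every row of $M:=AT$ has entries in $\{-1,0,1\}$ with at most one $+1$ and at most one $-1$.

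To finish, I would prove by induction on $n$ that any square matrix $M$ whose rows each contain at most one $+1$ and at most one $-1$ (and no other nonzero entries) satisfies $\det M\in\{-1,0,1\}$. If some row has at most one nonzero entry, then either that row is zero, giving $\det M=0$, or Laplace expansion along it gives $\det M=\pm\det M'$, where $M'$ arises by deleting that row and the column carrying the nonzero entry; deleting a column can only remove entries, so $M'$ retains the row structure and induction applies. Otherwise every row has exactly one $+1$ and one $-1$, so each row sums to zero, the all-ones vector lies in $\ker M$, and $\det M=0$. Since $A$, and therefore $M=AT$, is invertible, the only surviving case is $\det M=\pm1$, whence $\det A=\det M=\pm1$, as required.

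I expect the only genuine content to be recognizing the difference operation $T$ that converts each consecutive block of ones into a sparse row with at most one $+1$ and one $-1$. The resulting determinant bound is precisely the total unimodularity of (transposed) network matrices, but the short inductive argument above sidesteps that machinery and keeps the proof self-contained.
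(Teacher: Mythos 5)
Your proof is correct, but it takes a genuinely different route from the paper's. The paper disposes of the statement in three sentences by invoking the theory of totally unimodular matrices: it recalls the definition, cites Schrijver's book for the standard fact that an interval (consecutive-ones) zero/one matrix is totally unimodular, and concludes that an invertible unimodular integer matrix has an integer inverse. You instead keep everything self-contained: the reduction to $\det A=\pm 1$ via the adjugate, the unimodular differencing matrix $T$ that turns each interval row into a row with at most one $+1$ and one $-1$, and the short induction (Laplace expansion along a near-empty row, or the all-ones kernel vector when every row has both signs) showing $\det(AT)\in\{-1,0,1\}$. Your argument is essentially a from-scratch proof of the one determinant evaluation the paper needs, rather than an appeal to the full total-unimodularity machinery; what you lose is the stronger statement about all square submatrices (which the proposition does not require), and what you gain is a proof with no external dependencies. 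The steps all check out: the entries $(AT)_{ik}=a_{ik}-a_{i,k-1}$ are computed correctly for an interval row, column permutations and the unimodular $T$ preserve $|\det|$, and the induction handles both cases cleanly.
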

\begin{proof}
An integer matrix is \emph{totally unimodular} if the determinant of every square sub-matrix of it is -1, 0, or 1. For the basic theory of these matrices, see \cite{schrijver86}. A zero/one--matrix with the ones consecutive in each row is totally unimodular. A unimodular matrix that is invertible has an integer inverse matrix. 
\end{proof}

\begin{lemma}\label{lem:vertexLabels}
The vertices of  $W^d_n, Y^d_n,$ and $Z_n^d$ are labelled by the generators of $I_{P_n}^d.$
\end{lemma}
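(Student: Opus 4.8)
The plan is to treat the three complexes in parallel, using that $P_n$ is bipartite with parts $A=\{1,3,5,\dots\}$ and $B=\{2,4,6,\dots\}$, so that Proposition~\ref{prop:semiHibi} applies. First I would record two facts about the generators. By Proposition~\ref{prop:semiHibi}(ii) the monomial labels of the lattice points of $\mathrm{Newt}(I_{P_n}^d)$ are exactly the minimal generators of $I_{P_n}^d$; and these minimal generators are precisely the distinct products $\prod_{i=1}^d x_{a_i}x_{a_i+1}$ of $d$ edges of $P_n$, since every such product lies in $I_{P_n}^d$ and, as all of them have total degree $2d$, none can divide another, so each is minimal. Thus it suffices, for each complex, to show that its set of vertex labels coincides with the set of lattice points of $\mathrm{Newt}(I_{P_n}^d)$.

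The key structural observation I would make is that, after splitting the coordinates into the odd block $(y_1,y_3,\dots)$ and the even block $(y_2,y_4,\dots)$, the two equations cutting out the affine hull of $\mathrm{Newt}(I_{P_n}^d)$ are the full block-sums $\sum_{v\in A}y_v=d$ and $\sum_{v\in B}y_v=d$, each hyperplane $H'_{i,j}$ is a prefix sum $y_i+y_{i-2}+\cdots$ inside a single block, and each $H_{i,j}$ involves a single coordinate. In the block ordering the normal vector of every one of these hyperplanes therefore has its ones occurring consecutively, so any matrix assembled from such normals has the consecutive-ones property.

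For $W_n^d$ I would argue directly: its vertices are lattice points by Proposition~\ref{prop:semiHibi}(iii), hence lie in $\mathrm{Newt}(I_{P_n}^d)$ and are labeled by minimal generators; conversely any lattice point $\alpha$ lies on the coordinate hyperplanes $y_i=\alpha_i$, whose common intersection is the single point $\alpha$, so $\alpha$ is a $0$-cell. For $Y_n^d$ I would read the vertices off the explicit description following Proposition~\ref{prop:realiz}: they are the points $\sum_{i=1}^d(\mathbf{e}_{a_i}+\mathbf{e}_{a_i+1})$ with $a_1\le\cdots\le a_d$, whose coordinate labels are exactly the products $\prod_i x_{a_i}x_{a_i+1}$, i.e.\ the minimal generators. (Equivalently, the prefix-sum hyperplanes through a lattice point determine its coordinates, so every lattice point is a vertex, and by the consecutive-ones property every vertex is a lattice point.)

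The one genuinely new point is $Z_n^d$, which I expect to be the main obstacle. Since $Z_n^d$ refines $W_n^d$, all minimal generators already occur among its vertices, and the only issue is that refining might create a vertex with non-integer coordinates. Each vertex of $Z_n^d$ is the unique solution of the $n\times n$ system formed by the two affine-hull equations $\sum_{v\in A}y_v=d$, $\sum_{v\in B}y_v=d$ together with $n-2$ of the hyperplanes $H_{i,j}$, $H'_{i,j}$. By the block observation every row of this system has the consecutive-ones property, and the coefficient matrix is invertible because the system determines a single point; hence its inverse is integral by Proposition~\ref{prop:zero}, and since all right-hand sides are integers the vertex is a lattice point. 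By Proposition~\ref{prop:semiHibi}(ii) it is then labeled by a minimal generator, which completes the proof.
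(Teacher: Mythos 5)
Your proposal is correct and follows essentially the same route as the paper: $W^d_n$ via Proposition~\ref{prop:semiHibi}/\ref{prop:prev}, $Y^d_n$ via the explicit vertex description after Proposition~\ref{prop:realiz}, and $Z^d_n$ via the odd/even block reordering giving the consecutive-ones property and hence integrality of the defining system by Proposition~\ref{prop:zero}. Your version is marginally more explicit in listing the two affine-hull equations among the rows of the system for $Z^d_n$, but the argument is the same.
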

\begin{proof}
By Proposition~\ref{prop:prev}, the complex $W^d_n$ supports a cellular resolution o $I^d_{P_n}$ and so the vertices are labeled by the generators of  $I_{P_n}^d.$

By the linear map in Proposition~\ref{prop:realiz}, the vertices of $Y^d_n$ are lattice points. By the discussion after Proposition~\ref{prop:realiz}, all lattice points in $\textrm{{Newt}}(I_G)$ are vertices since there is a vertex in $X_{d,d+n-2}$ for every lattice point in $\textrm{{Newt}}(I_G)$.

It remains is to show that $Z^d_n$ only has lattice points as vertices. A vertex is defined as the unique solution to a system of equations $Ay=b$ where each row is the equation of one of the defining hyperplanes of the polyhedral complex.
The defining hyperplanes of the complex $Z^d_n$ are of the form $H_{i,j}$ or $H'_{i,j}$.
In the equations for these hyperplanes in Definition~\ref{def:complexes}, there is either only odd coordinates $y_{2i+1}$ occurring or only even coordinates $y_{2i+1}$ occuring. Permuting the columns of $A$ so that the first columns are the odd columns $y_1,y_3,\ldots$ and then followed by the even columns $y_2,y_4,\ldots$ shows that $A$ has the consecutive-ones property.
The matrix $A$ is invertible, as it defines a vertex, and by Proposition~\ref{prop:zero}, the matrix $A$ has an integer inverse. The column vector $b$ is integer and thus, $y$ is a lattice point.
\end{proof}

\begin{proposition}\label{prop:cellZ}
The complex $Z_n^d$ supports a cellular resolution of $I^d_{P_n}.$
\end{proposition}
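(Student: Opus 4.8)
The plan is to verify the hypothesis of the acyclicity criterion above for $Z_n^d$, exactly as was done for $W_n^d$ in the proof of Proposition~\ref{prop:base}. By Lemma~\ref{lem:vertexLabels} the vertices of $Z_n^d$ are labelled by the generators of $I_{P_n}^d$, so the ideal generated by the vertex labels is precisely $I_{P_n}^d$. Hence it suffices to show that $(Z_n^d)_{\le\alpha}$ is acyclic or empty for every monomial $\alpha$, equivalently for every lattice point $\alpha\in\mathbb{Z}_{\ge0}^n$.

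First I would record, as in Proposition~\ref{prop:base}, that a cell $\sigma$ of $Z_n^d$ lies in $(Z_n^d)_{\le\alpha}$ precisely when all of its vertices have coordinates $\le\alpha$: since the labelling is induced by the coordinates, $\ell(\sigma)$ is the coordinatewise maximum of the vertex labels, and $\ell(\sigma)$ divides $\alpha$ if and only if that maximum is $\le\alpha$. Because each cell is the convex hull of its vertices and the region $\{\mathbf{y}\le\alpha\}$ is convex, this is in turn equivalent to $\sigma\subseteq\{\mathbf{y}\le\alpha\}$. Thus $(Z_n^d)_{\le\alpha}$ is the subcomplex of $Z_n^d$ consisting of all cells contained in the box $\{\mathbf{y}\le\alpha\}$.

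The key step is to identify the underlying space of this subcomplex with the convex set $\mathrm{Newt}(I_{P_n}^d)\cap\{\mathbf{y}\le\alpha\}$, and this is exactly where the coordinate hyperplanes built into $Z_n^d$ are used. Since $\alpha\in\mathbb{Z}_{\ge0}^n$, each bounding wall $y_i=\alpha_i$ of the box is one of the hyperplanes $H_{i,\alpha_i}$ of Definition~\ref{def:complexes} used to cut out $Z_n^d$; therefore no cell of $Z_n^d$ can straddle such a wall, and every cell lies entirely on one side of it. Consequently the union of the cells contained in $\{\mathbf{y}\le\alpha\}$ is all of $\mathrm{Newt}(I_{P_n}^d)\cap\{\mathbf{y}\le\alpha\}$, an intersection of the polytope $\mathrm{Newt}(I_{P_n}^d)$ with half-spaces. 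This region is convex, so if non-empty it is contractible and hence acyclic. (This is precisely the feature that the $H'_{i,j}$ alone do not provide, which is why the common refinement $Z_n^d$ rather than $Y_n^d$ is taken.)

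Having shown that $(Z_n^d)_{\le\alpha}$ is acyclic or empty for all $\alpha$, the acyclicity criterion yields that $Z_n^d$ supports a cellular resolution of $I_{P_n}^d$. The only point requiring care, and the main obstacle, is the straddling argument of the previous paragraph: one must confirm that refining $W_n^d$ by the additional hyperplanes $H'_{i,j}$ does not spoil the compatibility of the subdivision with the boxes $\{\mathbf{y}\le\alpha\}$. This holds exactly because the coordinate hyperplanes $H_{i,j}$ remain part of $Z_n^d$, so the convexity argument of Proposition~\ref{prop:base} transfers verbatim to the finer complex. Note that minimality is not claimed here, so no comparison of labels on nested cells is needed.
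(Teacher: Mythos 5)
Your proof is correct and follows essentially the same route as the paper: both reduce to the observation that, because the coordinate hyperplanes $H_{i,\alpha_i}$ are part of the subdivision defining $Z_n^d$ and its vertices are lattice points (Lemma~\ref{lem:vertexLabels}), the subcomplex $(Z_n^d)_{\le\alpha}$ is exactly the convex set $\mathrm{Newt}(I_{P_n}^d)\cap\{\mathbf{y}\le\alpha\}$, hence acyclic or empty. The paper states this more tersely by calling $Z_n^d$ a refinement of $W_n^d$ with no new vertices; you spell out the same convexity argument of Proposition~\ref{prop:base} directly on $Z_n^d$.
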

\begin{proof}
By Proposition \ref{prop:prev}, the complex $W^d_n$ supports a cellular resolution of $I^d_{P_n}.$ As $Z_n^d$ is a refinement of $W^d_n$ with no new vertices, according to Lemma \ref{lem:vertexLabels}, and the labels are from  coordinates, the subcomplexes $(Z_n^d)_{\le \alpha}$ are convex or empty.
\end{proof}

\begin{remark}
The argument given in Proposition \ref{prop:cellZ} would not work for $Y^d_n,$ because every $(Y^d_n)_{\leq \alpha}$
is not convex, although every cell of $Y^d_n$ is convex.
\end{remark}

In Figure~\ref{fig:medCordHyp1}, the complex $Z^2_5$ is depicted. Removing the red hyperplane $H_{3,1}$ gives the complex $Y^2_5$. The two toblerone cells in $Y^2_5$ are subdivided by the hyperplane $H_{3,1}$. The Morse matching used to remove the hyperplane $H_{3,1}$ only matches cells in $(Z^2_5)_{\le 12111}$ and $(Z^2_5)_{\le 11121}.$ These complexes are depicted in green in Figures~\ref{fig:medCordHyp2} and~\ref{fig:medCordHyp3}. The green tetrahedron is matched with the green triangle not on the boundary of the toblerone cell. The remaining green triangles that are not in $Y^2_5$ are matched to the edges that are not in $Y^2_5$.

 \begin{figure}
 \begin{center}
  \includegraphics[width=70.0mm]{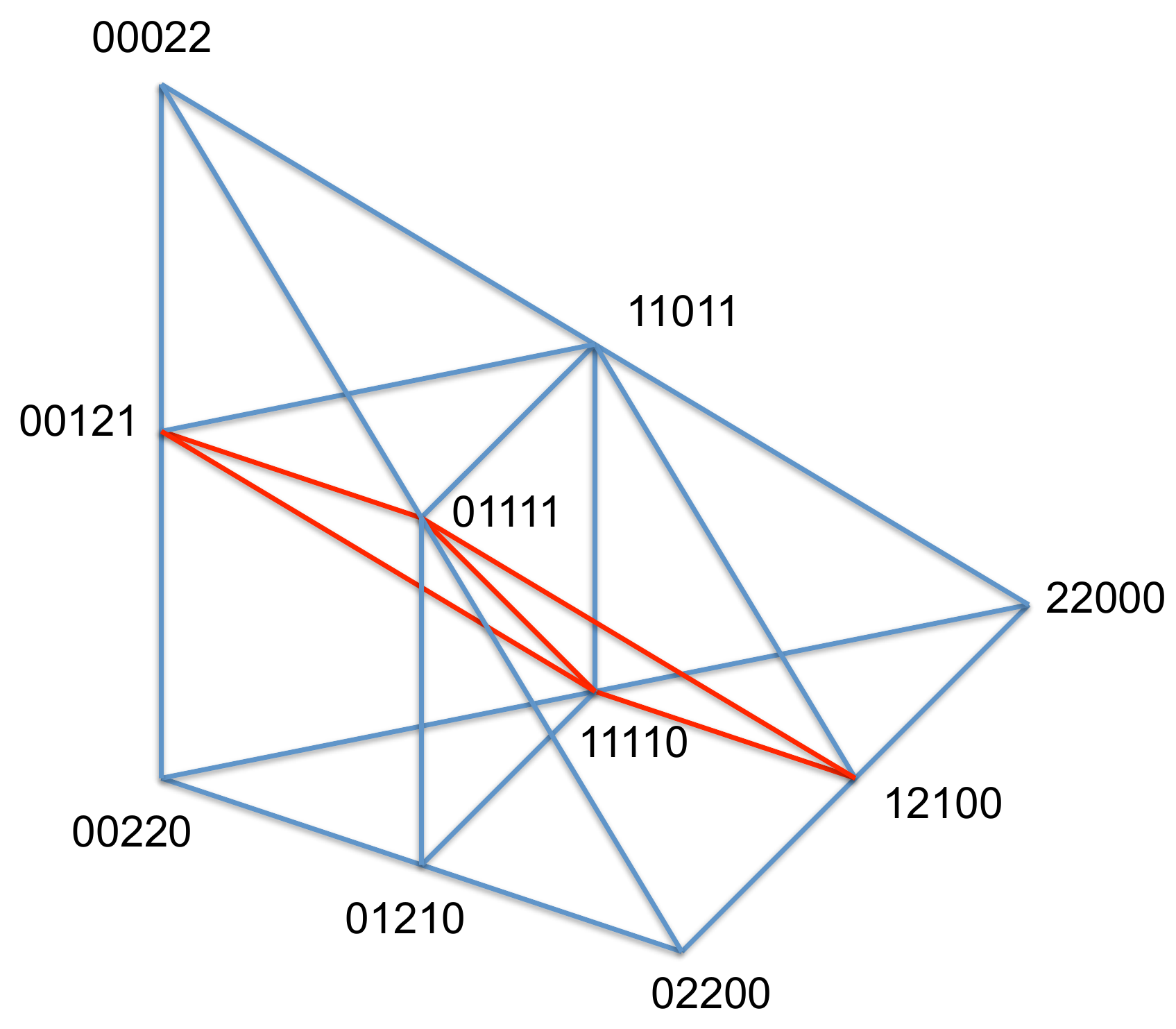}
 \caption{ The complex $Z^2_5$. The red hyperplane is the coordinate hyperplane $H_{3,1}$ that will be removed.}\label{fig:medCordHyp1}
 \end{center} 
 \end{figure}

\begin{figure}
 \begin{center}
  \includegraphics[width=140.0mm]{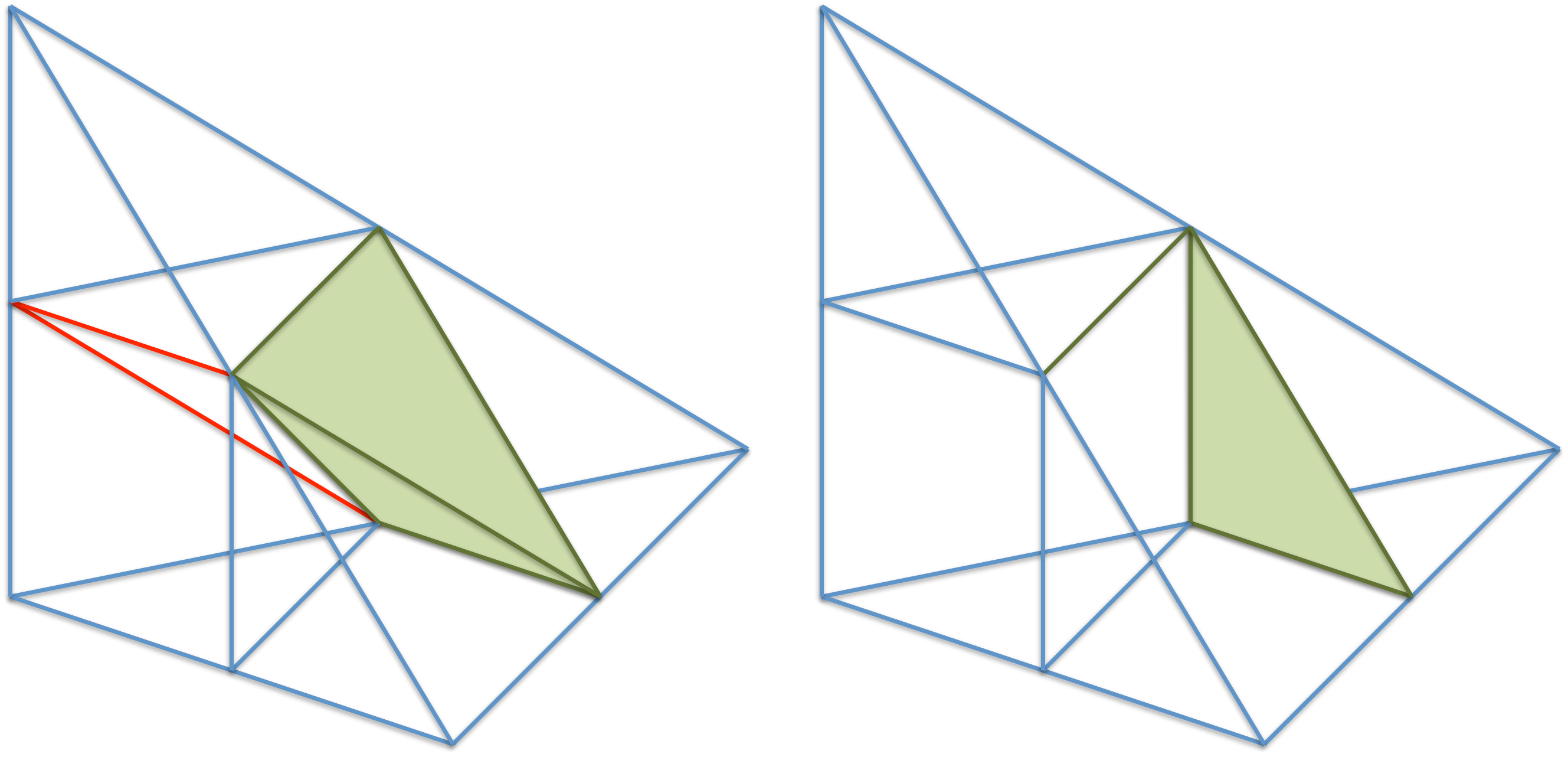}
 \caption{The subcomplex $\leq 12111$ in green. The left picture is the subocmplex in $Z^2_5$. The right picture is in $Y^d_5$ with  the red hyperplane $H_{3,1}$ removed.}
 \label{fig:medCordHyp2}
 \end{center} 
 \end{figure}

\begin{figure}
 \begin{center}
  \includegraphics[width=140.0mm]{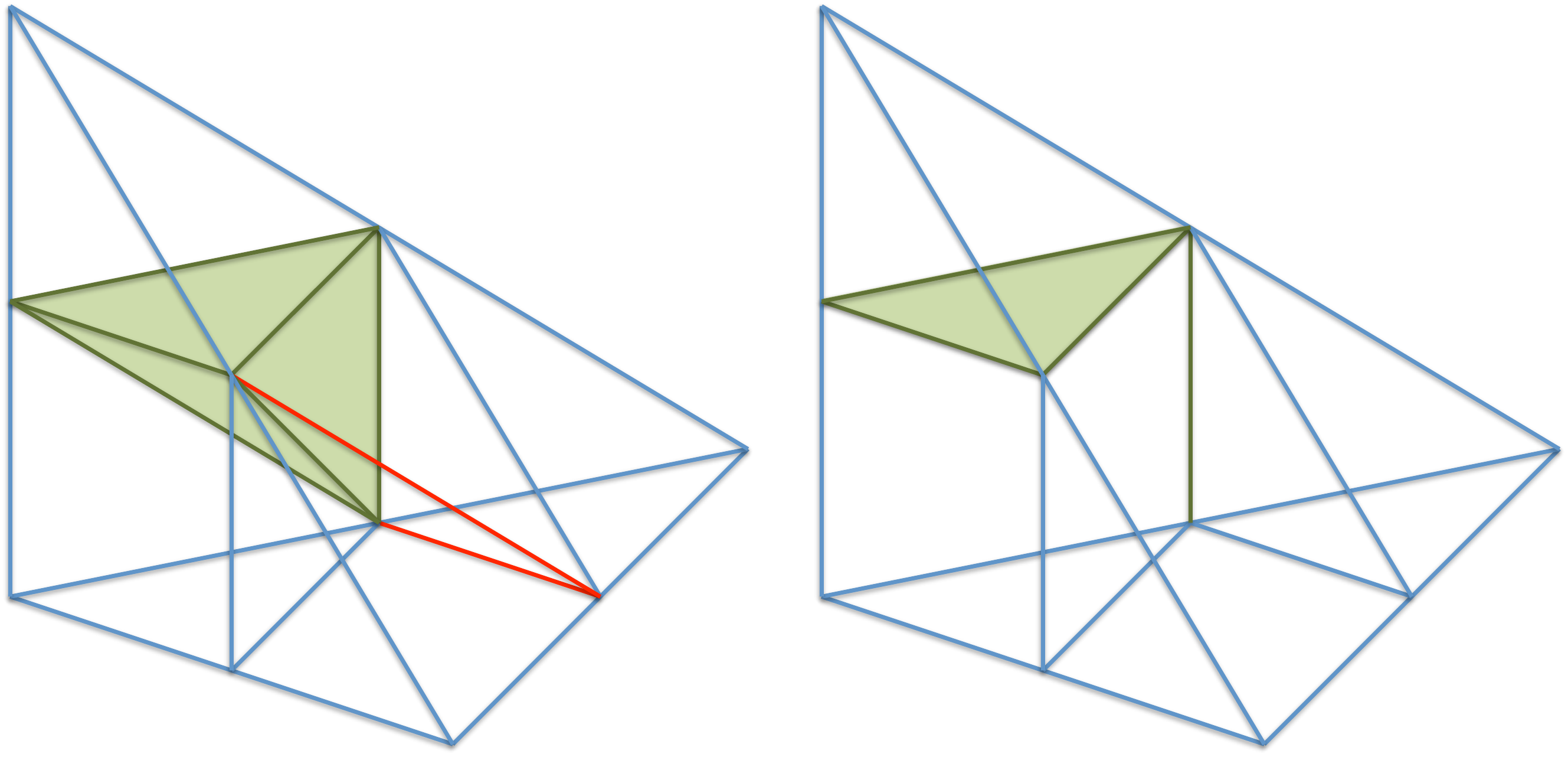}
 \caption{The subcomplex $\leq 11121$ in green. The left picture is the subcomplex in $Z^2_5$. The right picture is in $Y^d_5$ with  the red hyperplane $H_{3,1}$ removed.}
  \label{fig:medCordHyp3}
 \end{center} 
 \end{figure}

\section{The complex $Y^d_n$ supports a cellular resolution of $I_{P_n}^d$}
The resolutions obtained in the previous section are often not minimal. Using discrete Morse theory, it is possible to make the resolutions smaller and sometimes minimal.

\begin{theorem}[The main theorem of discrete Morse theory]
Let $X$ be a regular CW-complex with face poset $P.$ If $M$ is an acyclic matching on $P,$ then there is a CW-complex $\tilde{X}$ whose cells correspond to the critical cells and they are homotopy equivalent.
\end{theorem}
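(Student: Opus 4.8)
The plan is to turn the combinatorial datum $M$ into a filtration of $X$ and to read off $\tilde X$ by tracking how the homotopy type changes as the filtration is built up. The guiding principle is that a matched pair $(\sigma,\tau)$, with $\sigma<\tau$ a covering relation, behaves like a cancelling pair that can be introduced without altering homotopy type, whereas each unmatched (critical) cell contributes exactly one cell to $\tilde X$. Since the matched cells come in pairs, discarding them from the cell count leaves precisely the critical cells, which is the correspondence the theorem asserts.

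First I would promote the acyclic matching to a total order on the cells. The key consequence of acyclicity is that the face poset $P$ admits a linear extension $\sigma_1,\ldots,\sigma_N$ — so that every cell precedes all of its cofaces — in which the two cells of each matched pair occur consecutively. This order yields a filtration $X_0\subset X_1\subset\cdots\subset X_N=X$ by subcomplexes, where $X_k$ is the union of the closed cells $\overline{\sigma_1},\ldots,\overline{\sigma_k}$. Regularity of $X$ guarantees that each $X_k$ is again a regular CW-complex, since the boundary of every newly added cell already lies in the preceding stage.

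Next I would examine the filtration one step at a time. When the new cell $\sigma_k$ is critical of dimension $p$, its boundary lies in $X_{k-1}$ and $X_k$ is obtained by attaching a single $p$-cell, contributing one $p$-cell to the model being assembled. When $\sigma_k$ and $\sigma_{k+1}$ are the two cells of a matched pair, the earlier one is a free face of its partner at that stage, so passing from $X_{k-1}$ to $X_{k+1}$ is an elementary expansion and the inclusion is a deformation retraction that leaves the homotopy type unchanged. Composing these elementary moves produces a CW-complex $\tilde X$ with exactly one cell for each critical cell and a homotopy equivalence onto $X$, the attaching maps of $\tilde X$ being recorded by the gradient paths of $M$.

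The hard part is the matched step: checking that when a pair is crossed the earlier cell really has become free, and that the attaching data inherited by the critical cells is unambiguous. Both rest on acyclicity. Consecutiveness of the pair in a face-respecting linear order forces the partner to be the only coface of the lower cell present at that moment, and such a linear order exists precisely because no directed cycle obstructs it; the same absence of cycles ensures that the gradient paths along which cells are pushed terminate, so the induced attaching maps are well defined. Regularity of $X$ is what makes each local deformation canonical, and the complete bookkeeping is carried out in \cite{BW}.
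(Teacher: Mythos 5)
The paper does not actually prove this statement: it is quoted as background (``the main theorem of discrete Morse theory'') and used as a black box, the relevant sources being Forman \cite{forman98} and, for the version adapted to cellular resolutions, Batzies and Welker \cite{BW}. Your sketch is the standard argument behind those references, and it is essentially sound: acyclicity of $M$ gives a linear extension of $P$ in which matched pairs are consecutive, the resulting filtration adds either a single critical cell or a matched pair at each stage, and for a regular CW-complex a matched pair is a free-face pair, so that stage is an elementary collapse and does not change the homotopy type. One point to state more carefully is that the critical cells do not form a subcomplex of $X$, so you cannot literally retract $X$ onto them; the induction must instead carry along a homotopy equivalence $X_k\simeq \tilde X_k$ and attach each new critical cell to $\tilde X_k$ along the composite of its original attaching map with that equivalence. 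You gesture at this with ``the model being assembled,'' and you correctly identify that the substantive content you defer to \cite{BW} is precisely the description of the resulting attaching maps by gradient paths --- which is also the part the paper later relies on in Lemma \ref{lemma:boundary}.
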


It is possible to extend this to work for cellular resolutions. This was done by Batzies and Welker~\cite{BW}.

\begin{theorem}\label{thm:alebraicmorsetheory}
Let $X$ be a cell complex supporting a cellular resolution and $M$ a Morse matching of this complex. If $M$ only matches cells with the same labels, then the Morse complex $\tilde{X}$ also supports a cellular resolution of the same ideal.
\end{theorem}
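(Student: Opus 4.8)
The plan is to reduce the statement to the homology criterion already recorded above: a monomially labeled complex supports a cellular resolution precisely when every subcomplex $X_{\le\alpha}$ is acyclic or empty. It therefore suffices to transport this property from $X$ to $\tilde{X}$, where $\tilde{X}$ carries the inherited labeling assigning to each critical cell its original label $\ell(\sigma)$. The first thing I would check is that this inherited labeling is again a genuine monomial labeling, i.e.\ that $\ell(\rho)$ divides $\ell(\sigma)$ whenever $\rho$ is a face of $\sigma$ in $\tilde{X}$; only then is $(\tilde{X})_{\le\alpha}$ an honest subcomplex of $\tilde{X}$ and the criterion applicable.

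The key mechanism is that the equal-label hypothesis makes the matching compatible with the filtration by monomials. I would show that along any gradient path $\sigma > \tau < \sigma' > \tau' < \cdots$ the label is non-increasing with respect to divisibility: each down-step $\sigma > \tau$ gives $\ell(\tau)\mid\ell(\sigma)$ because $\ell$ is a monomial labeling on $X$, while each up-step $\tau < \sigma'$ is a matched pair and hence $\ell(\tau)=\ell(\sigma')$ by hypothesis. Consequently every cell traversed by a gradient path out of $\sigma$ has label dividing $\ell(\sigma)$. This single observation does the essential work: it establishes the divisibility of the inherited labeling needed above (the incidence in $\tilde{X}$ between critical cells $\sigma$ and $\rho$ is computed from gradient paths starting at a facet of $\sigma$ and ending at $\rho$, so $\ell(\rho)\mid\ell(\sigma)$), and it shows that the incidence number in $\tilde{X}$ between two critical cells is computed from gradient paths lying entirely inside $X_{\le\alpha}$ whenever the larger cell lies in $X_{\le\alpha}$.

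With that in hand I would prove the identity $(\tilde{X})_{\le\alpha} = \widetilde{X_{\le\alpha}}$, the Morse complex of $X_{\le\alpha}$ under the restricted matching. Here I use again that a matched pair $(\tau,\sigma)$ satisfies $\ell(\tau)=\ell(\sigma)$, so it lies entirely inside or entirely outside the subcomplex $X_{\le\alpha}$; hence $M$ restricts to an acyclic matching on $X_{\le\alpha}$ whose critical cells are exactly the critical cells of $M$ with label dividing $\alpha$, and by the previous paragraph the restricted gradient paths reproduce the Morse incidences. The topological main theorem of discrete Morse theory then gives a homotopy equivalence $\widetilde{X_{\le\alpha}}\simeq X_{\le\alpha}$, so $(\tilde{X})_{\le\alpha}$ is acyclic or empty because $X_{\le\alpha}$ is. Applying the criterion theorem to $\tilde{X}$ yields the claimed cellular resolution.

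Finally, to see that $\tilde{X}$ resolves the same ideal, I would argue at the level of the chain complexes rather than recompute the vertex generators. The equal-label condition means that every differential entry collapsed in the reduction is $\pm\,\ell(\sigma)/\ell(\tau)=\pm1$ times an incidence number, hence a unit of $S=\mathbf{k}[x_1,\ldots,x_n]$; this is exactly what allows the construction of \cite{BW} to produce an $S$-linear homotopy equivalence between the cellular chain complexes of $X$ and $\tilde{X}$ compatible with their augmentations. Both therefore have the same $H_0$, namely $S/I$, and resolve $I$. I expect the genuine obstacle to be the bookkeeping of the third paragraph---verifying that forming the Morse complex commutes with restriction to $X_{\le\alpha}$---since it is there that acyclicity of the restricted matching and the locality of gradient paths must be combined; once the non-increasing-label observation is isolated, the remaining steps are formal.
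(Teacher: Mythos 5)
The paper offers no proof of this theorem---it is quoted from Batzies and Welker \cite{BW}---and your sketch is a faithful reconstruction of their argument: the non-increase of labels along gradient paths (down-steps divide, matched up-steps are equal), the identification $(\tilde X)_{\le\alpha}=\widetilde{X_{\le\alpha}}$ via restriction of the matching, and the homotopy equivalence from the main theorem of discrete Morse theory are exactly the ingredients of the proof in \cite{BW}. The only caveat is that the inherited labeling on $\tilde X$ need not satisfy the lcm-of-vertex-labels condition in the paper's definition of a monomial labeling, only the divisibility along faces that you verify; this is precisely why the paper invokes the more general $\mathbb{Z}^n$-graded framework of \cite{BW} rather than the polyhedral setting of \cite{MS}.
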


The complexes $X_{d,d+n-2}$ and $Y^d_n$ are isomorphic. The embedding of $Y^d_n$ is useful and so is the description of $X_{d,d+n-2}$ as a product of simplices. The diagrams in Figure~\ref{fig:staircases} illustrate this. A cell $\sigma=\sigma_1\times\cdots\times\sigma_d$ is represented by the diagram containing the boxes with labels in $\sigma_k$ on row $k$. Vertices of $Y^d_n$ correspond to exactly one box on each row. The diagrams of maximal cells form staircases.
 \begin{figure}
 \begin{center}
  \includegraphics[width=\textwidth]{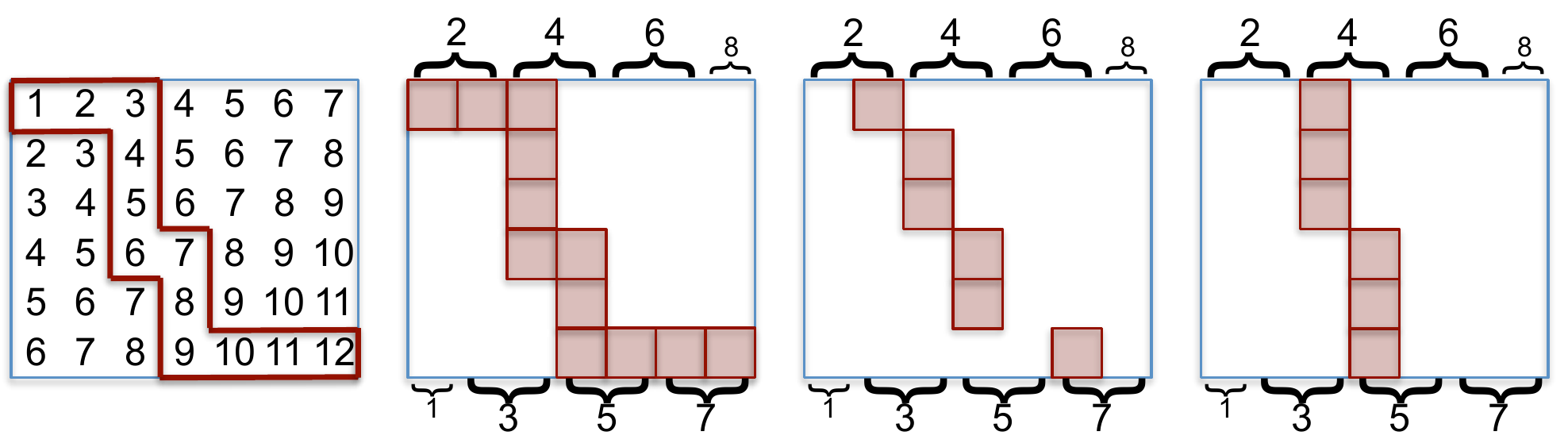}
 \caption{The cell $\{1,2,3 \}\times\{4 \}\times\{ 5\}\times\{6,7 \}\times\{8 \}\times\{9,10,11,12 \}$ of $Y^6_8$ in staircase notation with its vertices
 $\{2 \}\times\{ 4\}\times\{ 5\}\times\{ 7\}\times\{8 \}\times\{ 11 \}$ and $\{3 \}\times\{4 \}\times\{ 5\}\times\{7 \}\times\{8 \}\times\{ 9\}.$
 The geometric realizations of the vertices are given by counting the number of boxes:
 $(0,1,3,4,2,1,1,0)$ and $(0,0,3,6,3,0,0,0)$. The monomial labels also follows from counting the boxes. For the vertices, they are $x_2x_3^3x_4^4x_5^2x_6x_7$
 and $x_3^3x_4^6x_5^3,$ and the label of the cell is $x_1x_2^2x_3^5x_4^7x_5^4x_6^2x_7^2x_8.$}\label{fig:staircases}
 \end{center} 
 \end{figure}

To construct a cellular resolution supported by $Y^d_n,$ we find a Morse matching on $Z^d_n$ respecting the labels such that the
Morse complex is $Y^d_n.$ To describe the matchings, we first explain how the translated coordinate hyperplanes 
intersect the cells of $Y^d_n.$ As described in \cite{dochtermannEngstrom2012}, the Cayley trick gives a
connection between staircase triangulations of a product of two specific simplices and $X^d_n.$ The
Cayley trick and staircase triangulations are surveyed in \cite{deLoeraEtAlBook}, but we only need a piece
of the notation for cells of $Y^d_n$ that are explained in Figure~\ref{fig:staircases}.

\begin{definition}
Let $\sigma=\sigma_1\times\cdots\times\sigma_d$ be an open cell of $X_{d,d+n-2}$ with the embedding of $Y^d_n$. A hyperplane $H_{i,j} = \left\{ \mathbf{y} \in \mathbb{R}^n \left|  y_i  = j   \right.  \right\},$ as in Definition \ref{def:complexes}, is \emph{$\sigma$--subdividing} if it intersects $\sigma$. Define ${\mathrm sd}_i(\sigma)$ to be the set of open cells obtained from subdividing $\sigma$ with all $\sigma$--subdividing hyperplanes $H_{i',j}$ for $i'\ge i$.
\end{definition}

\begin{remark}
Both ${\mathrm sd}_i(\sigma)$ and  ${\mathrm sd}_{i+1}(\sigma)$ are sets of open cells from subdivisions of $\sigma.$ Note that there is a filtration: We get ${\mathrm sd}_i(\sigma)$ from ${\mathrm sd}_{i+1}(\sigma)$
by further subdividing by the system of parallel hyperplanes $H_{i,j} = \left\{ \mathbf{y} \in \mathbb{R}^n \left|  y_i  = j   \right.  \right\}$ for all integers $j.$ Another important property of the open cells in these subdivisions is that they are convex, since the cells we subdivide are already convex by definition.
\end{remark}

\begin{definition}
Let $\sigma=\sigma_1\times\cdots\times\sigma_d$ be a cell of $X_{d,d+n}$.
An element $j$ of $\sigma_i$ \emph{covers} the vertices $(j-i+1)$ and $(j-i+2)$ of $P_n.$ The set $\sigma_i$ \emph{covers} $k$, if a $j$ of $\sigma_i$ covers $k$. 
\end{definition}

At this, point we advise the reader to return to Figure~\ref{fig:staircases} and note that the numbers with curly brackets show the coverings. This notion is important in several technical proofs.

\begin{lemma}\label{lemma:match}
Let $\sigma=\sigma_1\times\cdots\times\sigma_d$ be an open cell of $X_{d,d+n-2}$ with the embedding of $Y^d_n$. Let $\tau$ be a $(d'-1)$--dimensional cell in ${\mathrm sd}_i(\sigma)$, but not in ${\mathrm sd}_{i+1}(\sigma)$. If $\tau$ is contained in the hyperplane $H_{i,j},$ then there is a unique $d'$-dimensional cell $\tau_-$ in 
${\mathrm sd}_i(\sigma)$ that both has $\tau$ on its boundary and all points $\mathbf{y}$ in $\tau_-$ satisfy that $y_i<j$. Furthermore, the labels of $\tau$ and $\tau_-$ are the same.
\end{lemma}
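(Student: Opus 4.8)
The plan is to first identify $\tau_-$ as a concrete slice, then reduce the label claim to an equality of coordinatewise maxima, and finally prove that equality.

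\emph{Identifying $\tau_-$.} By the filtration in the preceding remark, $\mathrm{sd}_i(\sigma)$ is obtained from $\mathrm{sd}_{i+1}(\sigma)$ by cutting with the single parallel family $\{H_{i,j'}\}_{j'}$, so every open cell of $\mathrm{sd}_i(\sigma)$ lies in a unique open cell of $\mathrm{sd}_{i+1}(\sigma)$. Let $\rho$ be the cell of $\mathrm{sd}_{i+1}(\sigma)$ containing $\tau$. Since every face of $\rho$ is again a cell of $\mathrm{sd}_{i+1}(\sigma)$ while $\tau\notin\mathrm{sd}_{i+1}(\sigma)$, the cell $\tau$ is not a face of $\rho$; hence $\tau$ meets the relative interior of $\rho$, the hyperplane $H_{i,j}$ passes through that interior, and the convex cell $\rho$ is cut by $H_{i,j}$ into the codimension-one slice $\tau=\rho\cap H_{i,j}$. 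In particular $\dim\rho=d'$. I then define $\tau_-$ to be the open cell of $\mathrm{sd}_i(\sigma)$ adjacent to $\tau$ across $H_{i,j}$ on the side $y_i<j$, namely the slab of $\rho$ between $H_{i,j}$ and the next lower $\sigma$-subdividing hyperplane (or the corresponding lower face of $\rho$). This is a single $d'$-dimensional convex cell having $\tau$ as a facet and satisfying $y_i<j$ throughout, and any $d'$-cell with these two properties must coincide with it, which gives both existence and uniqueness.

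\emph{Reducing the label equality.} Because all labelings are induced by the coordinates and, by Lemma~\ref{lem:vertexLabels}, the relevant vertices are lattice points, for any cell $c$ we have $\ell_k(c)=\max\{y_k\mid y\in\bar c\}$, the maximum being attained at a vertex. As $\tau\subseteq\bar\tau_-$, this gives $\ell_k(\tau)\le\ell_k(\tau_-)$ for every $k$, so it suffices to prove the reverse inequalities. For $k=i$ this is immediate: $y_i<j$ on $\tau_-$ while $\tau\subseteq H_{i,j}$ forces $\max_{\bar\tau_-}y_i=j=\max_{\bar\tau}y_i$. Everything therefore reduces to showing that for each $k\ne i$ the functional $y_k$ attains its maximum over $\bar\tau_-$ already on the top facet $\bar\tau$.

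\emph{The crux, and the main obstacle.} The content of the lemma is that descending from $\tau$ into $\tau_-$ never sets a new coordinate record. I would prove this in the staircase model using the covering numbers: a vertex $w$ of $\tau_-$ with $w_i<j$ is obtained from a vertex of $\tau$ by sliding a single box out of the columns that contribute to $y_i$, an operation which lowers $y_i$ and, reflecting the identity $y_i=S_i-S_{i-2}$ for the $H'$-functionals $S_{i'}(y)=\sum_{m\ge0}y_{i'-2m}$, can raise only one neighboring coordinate $y_k$, and only up to a value already realized by some vertex of $\tau$. Tracking these covering contributions then yields $w_k\le\max_{\bar\tau}y_k$ for all $k\ne i$, hence $\ell_k(\tau_-)\le\ell_k(\tau)$ and $\ell(\tau)=\ell(\tau_-)$. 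I expect this coordinate bookkeeping to be the one genuinely delicate step: the existence and uniqueness of $\tau_-$ are routine convex geometry and the coordinate $y_i$ is trivial, but for the remaining coordinates a box-slide really does raise a neighbor, and the work lies in verifying that this raise is always witnessed on $\tau$ itself.
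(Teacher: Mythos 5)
Your identification of $\tau_-$ agrees with the paper: the cell $\tau$ lies in a unique $d'$-dimensional cell $\tau'$ of $\mathrm{sd}_{i+1}(\sigma)$, the parallel family $H_{i,*}$ slices the convex cell $\tau'$ into slabs, and $\tau_-$ is the slab between $H_{i,j-1}$ and $H_{i,j}$; existence, uniqueness, and the case $k=i$ of the label equality all come out exactly as you describe. The reduction of the label claim to showing that each coordinate functional $y_k$, $k\ne i$, attains its maximum over $\overline{\tau_-}$ already on $\overline{\tau}$ is also the right reduction.

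However, the rest is a genuine gap rather than a proof. The entire difficulty of the lemma sits in the step you defer to ``coordinate bookkeeping,'' and your sketch of it is not yet an argument. Two concrete problems. First, the mechanism is different for $k>i$ and for $k<i$, and your box-sliding picture does not separate them: for $k>i$ the point is that $\tau'$ is already a cell of a subdivision refined by all $H_{k,*}$, so its closure cannot meet both sides of $H_{k,\ell_k(\tau)}$ and hence cannot reach $H_{k,\ell_k(\tau)+1}$ --- this is a statement about the filtration of subdivisions, not about covering numbers, and it needs the separate argument the paper gives as a standalone claim. Second, for $k<i$ the assertion that a descent in $y_i$ ``can raise only one neighboring coordinate $y_k$, and only up to a value already realized by some vertex of $\tau$'' is precisely the statement to be proved, and it is false as a purely local claim about single box slides: one must exhibit an actual vertex of $\tau$ realizing $\ell_k(\tau_-)$. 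The paper does this by taking a vertex $v^-$ of $\tau_-$ maximizing $y_k$ and a suitably chosen vertex $v$ of $\tau$, and splicing them into $w=v^-_1\times\cdots\times v^-_t\times v_{t+1}\times\cdots\times v_d$; showing $w$ is a vertex of $\tau$ requires (a) the observation that $\tau$ is not on the boundary of $\sigma$, so every element of every $\sigma_s$ is used by some vertex of $\tau$, and (b) a case analysis on where the splice index $t$ can be placed, which for $k=i-1$ splits into seven subcases depending on the structure of the $\sigma_s$ covering $i$. None of this is present in, or recoverable from, your outline, and you acknowledge as much. As written, the proposal proves the routine half of the lemma and states the hard half as a plausible expectation.
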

\begin{proof}
First we make use of the convexity of the open cells in the subdivisions. The $(d'-1)$--dimensional cell $\tau$ is contained in a $d'$-dimensional cell $\tau'$ in ${\mathrm sd}_{i+1}(\sigma)$. The parallel hyperplanes
\[\ldots, H_{i,j-2}, H_{i,j-1},  H_{i,j}, H_{i,j+1}, H_{i,j+2},.. \]  
slice the convex open cell $\tau'$ into pieces ending up in ${\mathrm sd}_i(\sigma).$ The cell between $H_{i,j-1}$ and  $H_{i,j}$ is denoted $\tau_-.$ The cell between $H_{i,j}$ and $H_{i,j+1}$ is denoted $\tau_+.$ The cell $\tau_-$ has $\tau$ on its boundary, $y_i<j$ for all of its points, and it is clearly the unique cell with that property. 

Let $\ell$ be the monomial labeling from coordinates. It remains to show that $\ell(\tau)=\ell(\tau_-)$. 
The inequality $\ell_{i'}(\tau_-) \geq \ell_{i'}(\tau)$ follows for all $i'$ from that $\tau$ is on the boundary of $\tau_-.$ 
The proof of $\ell_{i'}(\tau_-) \leq \ell_{i'}(\tau)$ is shown for different $i'$ in four cases.
\begin{itemize}
\item[I.] The case $i'=i.$

\noindent
The maximal $y_i$ on the boundary of both $\tau$ and $\tau_-$ is $j,$ so $\ell_i(\tau)=\ell_i(\tau_-)=j.$

\item[II.] The case $i'>i.$

\noindent
{\textbf{Claim.}} The closure of $\tau'$ does not intersect the hyperplane $H_{i',\ell_{i'}(\tau)+1}.$

The inequality $\ell_{i'}(\tau_-)\leq \ell_{i'}(\tau')$ follows from $\tau_- \subset \tau'$
and $\ell_{i'}(\tau') \leq \ell_{i'}(\tau)$ follows from the claim, and that the closures of $\tau \subset \tau'$ intersect the hyperplane $H_{i',\ell_{i'}(\tau)}.$ Thus, $\ell_i(\tau_-) \leq \ell_i(\tau).$

\noindent
{\emph{Proof of claim.}} Assume the contrary. As $i'>i,$ either both of $\tau$ and $\tau'$ are contained in the hyperplane $H_{i',\ell_{i'}(\tau)}$ or neither of them are. By assumption, $\tau'$ is not contained in 
$H_{i',\ell_{i'}(\tau)}$ since it intersects the parallel hyperplane $H_{i',\ell_{i'}(\tau)+1}$ and so neither is $\tau$. Since $\tau$ is in a subdivision generated in parts by intersecting with $H_{i',\ell_{i'}(\tau)},$ but is not contained in it, $\tau$ is on one side of $H_{i',\ell_{i'}(\tau)}.$ That is, $y_{i'}<\ell_{i'}(\tau)$ or $y_{i'}>\ell_{i'}(\tau)$ for all $y$ in $\tau.$ The second option is never true for any labeling, so $y_{i'}<\ell_{i'}(\tau)$ for all $y$ in $\tau.$ When subdividing by intersecting with $H_{i',\ast}$ hyperplanes, the new cells in the refined subdivision either end up between or in these hyperplanes. In particular, no cell can have points in its closure on different sides of an hyperplane. However, $y_{i'}<\ell_{i'}(\tau)$ for all $y$ in $\tau\subset \tau'$ and the closure of $\tau'$ intersects the hyperplane $H_{i',\ell_{i'}(\tau)+1}$ by assumption. Thus, $\tau'$ contains points in its closure on different sides of the hyperplane $H_{i',\ell_{i'}(\tau)},$ a contradiction, and hence the claim is proved.

\item[III.] The case $i'<i-1.$

Let $v^-=v^-_1 \times\cdots\times v^-_d$ be a vertex  of $\tau_-$ with $\ell_{i'}(v^-)=\ell_{i'}(\tau_-)$. If $\ell_i(v^-)=\ell_i(\tau)=j,$ then $v^-$ is a vertex of $\tau$ and $\ell_{i'}(\tau)\ge\ell_{i'}(\tau_-)$. 

The remaining subcase $\ell_i(v^-)=j-1$. By definition, the cell $\tau_+$ has some vertex $v^+$ on its boundary with $\ell_i(v^+)=j+1.$
Let $k$ and $r$ be the smallest and largest elements of $\{ l \mid \textrm{$\sigma_{l}$ covers $i$} \}.$
From the existence of vertices in $\sigma$ with monomial labels of $x_i$--degree $j-1$, $j$, and $j+1$, it follows that $r-k=j>0$ and that both $\sigma_k$ and $\sigma_r$ contains elements not covering $i.$

The cell $\tau$ is by definition not on the boundary of $\sigma=\sigma_1\times\cdots\times\sigma_d$. For every $v_s\in \sigma_s$, there is a vertex
$v=v_1\times\cdots\times v_d$ in the closure of $\tau$. Otherwise, $\tau$ would be on the boundary cell $\sigma_1\times\cdots\times\sigma_{s-1}\times(\sigma_s\setminus \{v_s\})\times\sigma_{s+1}\times\cdots\times\sigma_d$ of $\sigma$. In particular, we can choose a vertex $v=v_1\times\cdots\times v_d$ of $\tau$ with $v_r=\min\sigma_r$

Now, we show that $v_1,\ldots,v_k$ and $v^-_1,\ldots,v^-_k$ only cover elements smaller than $i$. By construction, it is enough to show that neither $v_k$, nor $v^-_k$, covers $i$. All elements of the $j-1$ cells $\sigma_{k+1},\ldots,\sigma_{r-1}$ cover $i.$ The label $\ell_{i}(v^-)=j-1$ shows that no other elements of $v^-$ cover $i$ and, in particular, $v^-_k$ does not cover $i.$ For $v$ the situation is similar. Its corresponding label is $\ell_{i}(v)=j,$ but $v_r$ covers $i$ by construction. This shows that $v_k$ does not cover $i.$

Since both $v_{k+1}$ and $v_{k+1}^-$ cover $i,$ this shows that $v_{k+1},\ldots,v_d$ and  $v_{k+1}^-,\ldots,v_d^-$ only covers elements larger or equal to $i-1.$
Define a vertex $w=v^-_1\times\cdots\times v^-_k\times v_{k+1}\times \cdots \times v_d$ of $\sigma$ to verify Case III.
It is a vertex of $\tau$ since $\ell_i(w)=\ell_i(v)=j$ and $ \ell_{i'}(w)=\ell_{i'}(v^-)=\ell_{i'}(\tau_-)$ since $i'<i-1.$

\item[IV.] The case $i'=i-1.$

This case is split into seven different subcases depending on the structure of $\sigma=\sigma_1\times\cdots\times\sigma_d$. The definition of $k$ and $r$ are the same as in Case III and $k<r$ by the same argument. 
In Subcases 1-5, there is no $\sigma_s$ satisfying $k<s<r$ and $|\sigma_s|=2,$ and the two remaining subcases are 6-7. All of the subcases are drawn in Figure~\ref{fig:sevenCases}.

For subcases 1-6, first choose a vertex $v=v_1\times\cdots\times v_d$ of $\tau$ that uses the crossed box and a vertex $v^-=v^-_1\times\cdots\times v^-_d$ of $\tau_-$ with maximal $x_{i-1}$--degree. Depending on the subcase, set
$
(1) \, t=k, \,\,
(2) \, t=r, \,\,
(3) \, t=r, \,\,
(4) \, t=k, \,\,
(5) \, t=k+1, \,\,
(6) \, t=s, \,\,
$
and define $w=v^-_1\times\cdots\times v^-_{t-1}\times v_{t}\times\cdots\times v_d$ to verify Case IV in a similar manner as in Case III.

Finally, for Subcase 7, if there is a vertex of $\tau$ using both the boxes marked by a cross and a circle, then we are done. Otherwise, choose $v^-$ as above, $v$ as a vertex of $\tau$ containing the box with a circle, and use the vertex $w$ defined by $t=s$ to verify Case IV.
\end{itemize}
\end{proof}

 \begin{figure}
 \begin{center}
  \includegraphics[width=100mm]{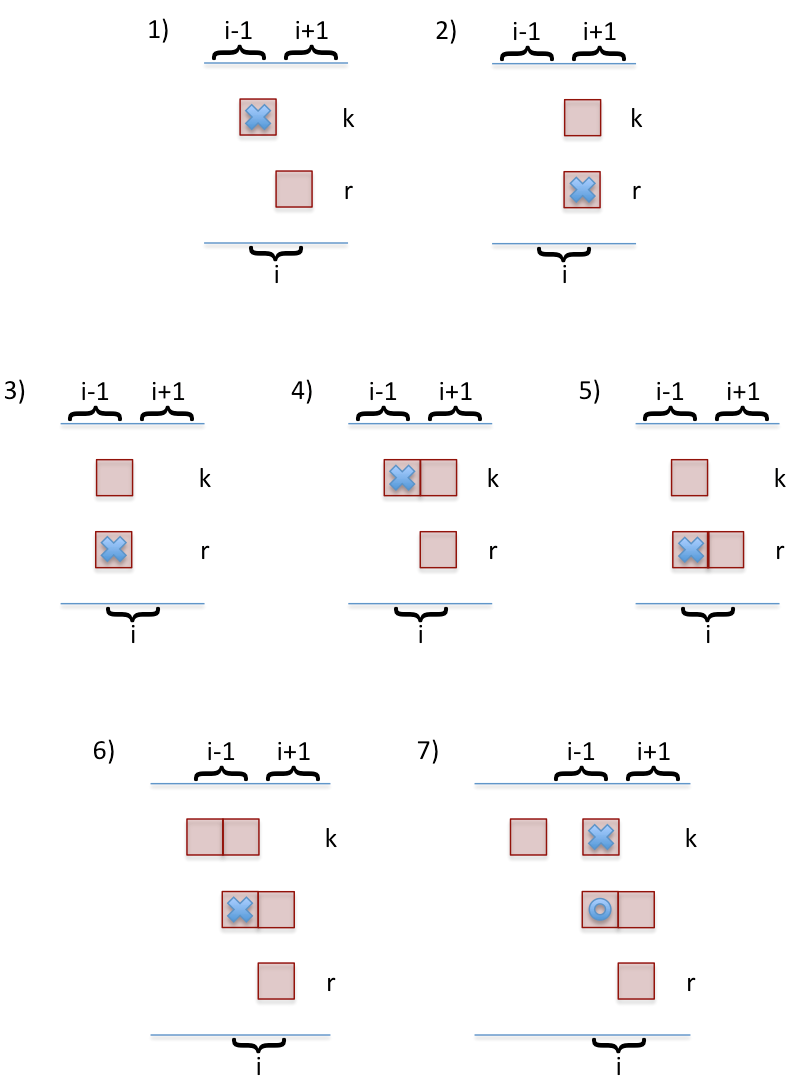}
 \caption{The seven subcases of case IV of Lemma~\ref{lemma:match}.}\label{fig:sevenCases}
 \end{center} 
 \end{figure}

\begin{theorem}\label{thm:Ysupports}
The embedded and labeled complex $Y^d_n$ supports a cellular resolution of $I^d_{P_n}$.
\end{theorem}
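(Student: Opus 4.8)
The plan is to realize $Y^d_n$ as the Morse complex of an acyclic matching on $Z^d_n$ that collapses away exactly the subdivision by the coordinate hyperplanes $H_{i,j}$, and then invoke algebraic Morse theory. Since $Z^d_n$ supports a cellular resolution of $I^d_{P_n}$ by Proposition~\ref{prop:cellZ}, Theorem~\ref{thm:alebraicmorsetheory} will transport this property to the Morse complex, provided the matching pairs only cells of equal label --- which is precisely what Lemma~\ref{lemma:match} delivers.

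To organize the collapse one coordinate at a time, I would interpolate a tower of polyhedral complexes $Z^d_n = X^{(1)}, X^{(2)}, \ldots, X^{(n+1)} = Y^d_n$, where $X^{(i)}$ is the subdivision of $\mathrm{Newt}(I^d_{P_n})$ by all the hyperplanes $H'_{i',j}$ together with the coordinate hyperplanes $H_{i',j}$ for $i' \ge i$. Restricted to any cell $\sigma$ of $Y^d_n$, the complex $X^{(i)}$ is exactly ${\mathrm{sd}}_i(\sigma)$ and $X^{(i+1)}$ is ${\mathrm{sd}}_{i+1}(\sigma)$, so $X^{(i)}$ refines $X^{(i+1)}$ by the single parallel family $H_{i,j}$. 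On $X^{(i)}$ I define the matching $M_i$ by the rule of Lemma~\ref{lemma:match}: working inside each cell $\sigma$ of $Y^d_n$, every cell $\tau \in {\mathrm{sd}}_i(\sigma)\setminus{\mathrm{sd}}_{i+1}(\sigma)$ lying in some $H_{i,j}$ is matched with the cell $\tau_-$ immediately below it. The Lemma guarantees $\ell(\tau)=\ell(\tau_-)$, so $M_i$ pairs only equal labels and the hypothesis of Theorem~\ref{thm:alebraicmorsetheory} holds.

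For acyclicity of $M_i$ I would apply the Patchwork (Cluster) Lemma to the order-preserving carrier map $X^{(i)} \to X^{(i+1)}$: the matching keeps cells inside the fibers, and each fiber is the subdivision of a single cell $\rho$ of $X^{(i+1)}$ by the one parallel family $H_{i,j}$. There the matching is just the \emph{slab push-down} pairing each interior cut-face with the slab directly beneath it, and a gradient path strictly decreases $y_i$ at each matched step, so no directed cycle can close up. Identifying the critical cells is then easy: inside each $\rho$ exactly the topmost $y_i$-slab $c^\ast_\rho$ survives, giving a dimension-preserving bijection $\rho \mapsto c^\ast_\rho$ onto the cells of $X^{(i+1)}$. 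The labels agree as well, because writing the slabs of $\rho$ as $c_0,\ldots,c_m$ in increasing $y_i$-order, Lemma~\ref{lemma:match} forces $\ell(c_{k-1})=\ell(\text{cut-face above }c_{k-1})$, which divides $\ell(c_k)$; hence $\ell(c_0)\mid \ell(c_1)\mid\cdots\mid\ell(c_m)$ and $\ell(c^\ast_\rho)=\ell(c_m)=\ell(\rho)$.

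Granting that the Morse complex of $(X^{(i)},M_i)$ is $X^{(i+1)}$, an induction on $i$ finishes the proof: $X^{(1)}=Z^d_n$ supports a resolution of $I^d_{P_n}$, and each Morse reduction preserves this by Theorem~\ref{thm:alebraicmorsetheory}, so $X^{(n+1)}=Y^d_n$ supports a cellular resolution of $I^d_{P_n}$. The step I expect to be the real obstacle is precisely this identification of the Morse complex with the coarser subdivision $X^{(i+1)}$ as a \emph{labeled CW-complex}: although the critical cells biject with the cells of $X^{(i+1)}$ with the correct dimensions and labels, one must still check that the boundary maps of the Morse complex --- computed by counting gradient paths through the matched pairs --- reproduce the cellular incidences of $X^{(i+1)}$ and create no spurious attachments, for instance between the top-slab representative of $\rho$ and the representative of its bottom facet. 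Everything else is routine once Lemma~\ref{lemma:match} is in hand, which is where the genuine combinatorial work already lives.
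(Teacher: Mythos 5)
Your proof takes the same route as the paper's: the paper's entire argument for Theorem~\ref{thm:Ysupports} is the single sentence that ``the matching implied by Lemma~\ref{lemma:match} together with Theorem~\ref{thm:alebraicmorsetheory} and Proposition~\ref{prop:cellZ} establishes this.'' Your proposal is exactly that argument, and in fact supplies the details the paper leaves implicit --- the coordinate-by-coordinate tower, the acyclicity of the matching via the decrease of $y_i$ along gradient paths, and the identification of the critical cells (the top slabs) with the cells of the coarser subdivision together with the equality of labels.
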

\begin{proof}
The matching implied by Lemma~\ref{lemma:match} together with Theorem~\ref{thm:alebraicmorsetheory} and Proposition~\ref{prop:cellZ} establishes this.
\end{proof}

\section{A minimal cellular resolution of $I_{P_n}^d.$}

To make cellular resolutions minimal, the resolutions supporting complexes are partitioned into pieces and discrete Morse theory is employed on each piece to reduce the size to the minimal one. The face poset of each piece is essentially the Alexander dual of  the independence complex of a graph. On a poset level, this is not a new simplicial complex \cite{bjornerButlerMatveev97} and for independence complexes of ordinary graphs this was made explicit in \cite{kawamura11}. Even though there is a connection on the level of (co)homology, there is no straight forward duality theory for discrete Morse theory \cite{benedettiXX} moving critical cells from the complex to its dual. Guided by results for independence complexes, as in \cite{engstrom09a}, we will study their `dual', the \emph{covering complex}, and find optimal discrete Morse matchings.

\begin{definition} \label{def:coveringComplex}
Let $G$ be a graph. The \emph{independence complex} ${\tt Ind}(G)$ is an abstract simplicial complex whose vertex set is $V(G)$ and $\sigma \in {\tt Ind}(G)$ if for every $e\in E(G)$, there is a $v \in e \setminus \sigma.$ The  \emph{covering complex} ${\tt Cov}(G)$ is an abstract simplicial complex whose vertex set is $E(G)$ and $\sigma \in {\tt Cov}(G)$ if for every $v\in V(G)$, there is an $e\in E(G) \setminus \sigma$ such that $v\in e.$
\end{definition}

Informally, the faces of a covering complex consists of all collections of edges of a graph such that the remaining edges covers the vertices of the graph.

\begin{proposition}\label{prop:coveringPath}
There is an acyclic matching on ${\tt Cov}(P_n)$ with
\begin{itemize}
\item[(i)] one critical cell on $(n-3)/3$ vertices if $n \equiv 0$ mod $3$;
\item[(ii)] no critical cells if $n \equiv 1$ mod $3$;
\item[(iii)] one critical cell on $(n-2)/3$ vertices if $n \equiv 2$ mod $3$.
\end{itemize}
\end{proposition}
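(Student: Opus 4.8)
The plan is to first collapse the covering complex of the path to the independence complex of a shorter path, and then to construct an explicit acyclic matching by a single element-matching recursion that strips three vertices at a time.

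First I would unwind the covering condition for $P_n$. Write $e_i=\{i,i+1\}$ for the edges, so that $E(P_n)=\{e_1,\dots,e_{n-1}\}$. A set $\sigma$ of edges lies in $\texttt{Cov}(P_n)$ exactly when $E(P_n)\setminus\sigma$ still covers every vertex. Vertex $1$ forces $e_1\notin\sigma$, vertex $n$ forces $e_{n-1}\notin\sigma$, and each interior vertex $i$ forces that $e_{i-1}$ and $e_i$ are not \emph{both} in $\sigma$. Hence the faces are precisely the subsets of $\{e_2,\dots,e_{n-2}\}$ that contain no two consecutive edges $e_i,e_{i+1}$. Relabelling $e_{i+1}$ as the $i$-th vertex of a path identifies this with an independence complex, giving $\texttt{Cov}(P_n)\cong\texttt{Ind}(P_{n-3})$, the independence complex of the path on the $n-3$ interior edges (the empty path when $n=3$). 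So it suffices to matching-analyse $\texttt{Ind}(P_m)$ with $m=n-3$, and to note that $m\bmod 3=n\bmod 3$.

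Next I would build the matching on $\texttt{Ind}(P_m)$ by recursion. Label the path vertices $1,\dots,m$ and perform the element matching on vertex $1$: pair $\tau\leftrightarrow\tau\cup\{1\}$ whenever $1,2\notin\tau$. This is an acyclic element matching, and a short check shows its \emph{unmatched} faces are exactly those containing vertex $2$. Since such a face contains neither $1$ nor $3$, it has the form $\{2\}\cup\rho$ with $\rho$ an independent set of the subpath on $\{4,\dots,m\}\cong P_{m-3}$; thus the unmatched faces form (the augmented face poset of) a copy of $\texttt{Ind}(P_{m-3})$ with one extra apex vertex $2$. Iterating the same step on vertex $4$, then $7$, and so on realizes the whole construction as a matching tree, so the concatenated matching is acyclic; I would invoke the matching-tree framework for independence complexes (as in \cite{engstrom09a,kawamura11}) rather than re-prove acyclicity by hand.

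Then I would read off the critical cells by unwinding the recursion on the augmented face poset, so that the empty face is allowed to participate. Each step $m\mapsto m-3$ adds exactly one apex vertex to the unique surviving critical cell, and the recursion bottoms out at $\texttt{Ind}(P_1)$ (a perfect matching, no critical cell), $\texttt{Ind}(P_2)$ (one critical vertex), or $\texttt{Ind}(P_0)=\{\emptyset\}$ (the empty face is critical). Counting the accumulated apexes yields: no critical cell when $m\equiv1$; one critical cell on $m/3$ vertices when $m\equiv0$; one critical cell on $(m+1)/3$ vertices when $m\equiv2$. Substituting $m=n-3$ turns $m/3$ into $(n-3)/3$ and $(m+1)/3$ into $(n-2)/3$, which is exactly (i)--(iii). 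The main obstacle I expect is the bookkeeping rather than the acyclicity: pinning down the identification $\texttt{Cov}(P_n)\cong\texttt{Ind}(P_{n-3})$ at the two boundary edges (which are forced out of every face and account for the shift by $3$ rather than $2$), and tracking the precise vertex count of the single critical cell through the recursion, including the degenerate case $n=3$, where the critical cell is the empty face on $0=(n-3)/3$ vertices.
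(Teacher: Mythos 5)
Your proof is correct and follows essentially the same route as the paper: both identify ${\tt Cov}(P_n)$ with ${\tt Ind}(P_{n-3})$ by noting that the two boundary edges are forced out of every face and that the complement-covering condition on interior vertices is exactly the no-two-consecutive-edges condition, and both then appeal to the optimal acyclic matching on path independence complexes from \cite{engstrom09a}. The only difference is that you additionally sketch that matching (the vertex-$1$ element matching and its three-step recursion) and verify the base cases explicitly, whereas the paper cites it outright.
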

\begin{proof}
The vertices corresponding to the edges $12$ and $(n-1)n$ are never in ${\tt Cov}(P_n).$ For the remaining edges $23,34,\ldots,(n-2)(n-1)$ there is a bijection between
${\tt Ind}(P_{n-3})$ and ${\tt Cov}(P_n)$ given by extending the vertex bijection $i \mapsto (i+1)(i+2).$ The optimal acyclic matching giving those critical cells for the independence complex is constructed in 
\cite{engstrom09a}.
\end{proof}

\begin{remark}
The critical cells are given by taking every third vertex/edge.
\end{remark}

\begin{proposition}\label{prop:coveringDisjointPath}
Let $G=\sqcup_{i=1}^t P_{n_i}$ be a disjoint union of paths. Then there is an acyclic matching on ${\tt Cov}(G)$ with at most one critical cell.
\end{proposition}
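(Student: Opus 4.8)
The plan is to recognize the covering complex of a disjoint union of paths as a \emph{simplicial join} of the covering complexes of the components, and then to assemble the matchings of Proposition~\ref{prop:coveringPath} one factor at a time. So first I would record the structural reduction. Writing $V(G)=\bigsqcup_{i} V(P_{n_i})$ and $E(G)=\bigsqcup_{i} E(P_{n_i})$, every edge set $\sigma\subseteq E(G)$ splits uniquely as $\sigma=\bigsqcup_i \sigma_i$ with $\sigma_i\subseteq E(P_{n_i})$. Since each vertex of $G$ lies in a single component and every edge through it lies in that same component, the condition defining ${\tt Cov}(G)$ in Definition~\ref{def:coveringComplex} can be tested one component at a time, so that
\[
\sigma\in{\tt Cov}(G)\iff \sigma_i\in{\tt Cov}(P_{n_i})\ \text{for all } i.
\]
As the edge sets $E(P_{n_i})$ are pairwise disjoint, this says precisely that ${\tt Cov}(G)={\tt Cov}(P_{n_1})\ast\cdots\ast{\tt Cov}(P_{n_t})$.

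Next I would combine the matchings across the join by induction on $t$, reducing to the two-factor case $K\ast L$. Given acyclic matchings $M_K$ and $M_L$ (with the empty face included, as the $0$-vertex critical cells of Proposition~\ref{prop:coveringPath} require), I would use the \emph{lexicographic} join matching: each face of $K\ast L$ is written uniquely as $\alpha\sqcup\beta$ with $\alpha\in K$ and $\beta\in L$; match it along $M_K$ in the $\alpha$-coordinate whenever $\alpha$ is not critical, and otherwise along $M_L$ in the $\beta$-coordinate. A face is then critical exactly when both $\alpha$ and $\beta$ are critical, so the critical faces are the joins $\alpha\ast\beta$ of critical faces of the two factors and their number is the \emph{product} of the two factor counts. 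Applying this down the induction, the number of critical cells of ${\tt Cov}(G)$ equals $\prod_i c_i$, where $c_i$ is the number of critical cells of the chosen matching on ${\tt Cov}(P_{n_i})$. By Proposition~\ref{prop:coveringPath} each $c_i\in\{0,1\}$, so $\prod_i c_i\le 1$: if some component has $n_i\equiv 1\pmod 3$ then $c_i=0$ and the whole complex has no critical cells, while otherwise every factor contributes one critical cell and their join is the single surviving one.

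The main obstacle is to prove that the lexicographic join matching is genuinely \emph{acyclic}, and to do so in the augmented poset where the empty face itself may be critical. I would establish this with the Patchwork (Cluster) Lemma applied in two layers. In the first layer, over the projection $K\ast L\to L$, $\alpha\sqcup\beta\mapsto\beta$, the fibers are copies of $K$ and I match each by $M_K$; this is acyclic and leaves critical the faces $\alpha\sqcup\beta$ with $\alpha$ critical in $M_K$. In the second layer I match these surviving faces over the projection to $\mathrm{Crit}(M_K)$, $\alpha\sqcup\beta\mapsto\alpha$, whose fibers are copies of $L$, using $M_L$. Since an acyclic matching stays acyclic when its critical subposet is refined by a further acyclic matching, the union of the two layers is the desired matching, with critical faces exactly the $\alpha\sqcup\beta$ having both coordinates critical. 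The delicate point throughout is the bookkeeping of the $(-1)$-dimensional empty face, which is what makes the count multiplicative and lets a single contractible factor annihilate all critical cells; as a consistency check on the count, the homotopy picture confirms the outcome, since a join of spheres is again a sphere and a join with a contractible factor is contractible.
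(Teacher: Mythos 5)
Your proposal is correct and follows essentially the same route as the paper: decompose ${\tt Cov}(G)$ as the join of the ${\tt Cov}(P_{n_i})$, use the multiplicativity of critical cell counts under joins, and invoke Proposition~\ref{prop:coveringPath}. The only difference is that you supply the patchwork-lemma justification for the join matching that the paper states without proof.
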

\begin{proof}
For simplicial complexes $\Sigma_1, \Sigma_2, \ldots, \Sigma_t$ with acyclic matchings on $c_1, c_2, \ldots, c_t$ critical cells, there is an acyclic matching on 
the join $\Sigma_1 \ast  \Sigma_2 \ast \cdots \ast \Sigma_t$ with $c_1c_2\cdots c_n$ critical cells. It follows from the definition that
 ${\tt Cov}(G)={\tt Cov}(P_{n_1})\ast {\tt Cov}(P_{n_2}) \ast \cdots \ast {\tt Cov}(P_{n_t})$ and from Proposition \ref{prop:coveringPath} that there is at most one critical cell for each ${\tt Cov}(P_{n_i}).$ This gives the desired acyclic matching.
\end{proof}

In order to describe an optimal algebraic discrete morse matching of $Y^d_n$, it is useful to express the labels with some new notation. Consider the cell $\sigma=\sigma_1 \times \cdots \times \sigma_6 =\{1,2,3 \}\times\{4 \}\times\{ 5\}\times\{6,7 \}\times\{8 \}\times\{9,10,11,12 \}$ depicted in Figure~\ref{fig:staircases}. Each $\sigma_i$ covers some vertices of the path $P_8,$ for example $\{1,2,3 \}$ covers $\{1,2,3,4\},$ and $\{6,7 \}$ covers $\{3,4,5\}$ according to Figure~\ref{fig:staircases}. We formalize this in a definition.

\begin{definition}
Let $\sigma=\sigma_1\times\cdots\times\sigma_d$ be a cell in $Y^d_n$. Then $V(\sigma_i) = \cup_{j \in \sigma_i} \{ j-i+1,j-i+2 \}.$
\end{definition}

This is a convenient and straight-forward lemma whose proof we omit.

\begin{lemma}\label{lemma:triv}
Let $s_1, \ldots s_d$ be non-empty sets of monomials, then
$\textrm{\emph{lcm}}(m_1\cdots m_d\mid m_i\in s_i)=\textrm{\emph{lcm}}(s_1)\cdots\textrm{\emph{lcm}}(s_d)$
\end{lemma}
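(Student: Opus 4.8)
The plan is to prove the identity $\textrm{lcm}(m_1\cdots m_d\mid m_i\in s_i)=\textrm{lcm}(s_1)\cdots\textrm{lcm}(s_d)$ by reducing to a variable-by-variable comparison of exponents, since the least common multiple of a set of monomials is computed coordinatewise: for each variable $x_k$, the $x_k$-exponent of a least common multiple is the maximum of the $x_k$-exponents over the monomials in the set. Thus it suffices to fix a variable $x_k$ and show that the corresponding exponents on the two sides agree.

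First I would introduce notation for the $x_k$-degree. Writing $e_k(m)$ for the exponent of $x_k$ in a monomial $m$, the $x_k$-degree of the left-hand side is
\[
\max\left\{ e_k(m_1\cdots m_d) \;\middle|\; m_i\in s_i \right\}
= \max\left\{ \sum_{i=1}^d e_k(m_i) \;\middle|\; m_i\in s_i \right\},
\]
using that the exponent of a product is the sum of the exponents. The $x_k$-degree of the right-hand side is
\[
\sum_{i=1}^d e_k(\textrm{lcm}(s_i)) = \sum_{i=1}^d \max\{ e_k(m_i) \mid m_i\in s_i \}.
\]

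The key step is then the elementary observation that maximizing a sum over independent choices equals the sum of the individual maxima, that is,
\[
\max\left\{ \sum_{i=1}^d e_k(m_i) \;\middle|\; m_i\in s_i \right\}
= \sum_{i=1}^d \max\{ e_k(m_i) \mid m_i\in s_i \}.
\]
This holds because each summand $e_k(m_i)$ depends only on the choice of $m_i$ from $s_i$, and these choices range over the sets $s_i$ independently; one maximizes the total by maximizing each term separately, which is possible precisely because the factors $s_i$ are decoupled. The hypothesis that each $s_i$ is non-empty guarantees that all the maxima in question are taken over non-empty finite sets and are therefore well-defined. Since $k$ was arbitrary, the two monomials have equal $x_k$-degree for every variable $x_k$, so they are equal, which completes the proof.

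The main obstacle here is essentially bookkeeping rather than any genuine difficulty: one must be careful to phrase the coordinatewise description of $\textrm{lcm}$ correctly and to justify the interchange of the maximum with the sum over the independent index sets. Given how routine this decoupling is, it is entirely reasonable that the authors state the result as a lemma and omit the proof.
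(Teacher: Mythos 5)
Your proof is correct: reducing the $\mathrm{lcm}$ identity to a variable-by-variable comparison of exponents and observing that the maximum of a sum over independent choices equals the sum of the maxima is exactly the routine argument intended here. The paper explicitly omits the proof as straightforward, so there is nothing to compare against, and your write-up fills the gap accurately.
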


\begin{proposition}\label{prop:label}
The label of the cell $\sigma=\sigma_1\times\cdots\times\sigma_d$ in $Y^d_n$ is $\prod_{i=1}^d\prod_{k\in V(\sigma_i)}x_k$.
\end{proposition}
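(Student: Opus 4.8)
The plan is to compute the cell label $\ell(\sigma)=\mathrm{lcm}\{\ell(v)\mid v \text{ a vertex of }\sigma\}$ by first determining the label of each individual vertex from the geometric realization of $Y^d_n$, then factoring the lcm across the $d$ rows using Lemma~\ref{lemma:triv}, and finally identifying each row's contribution as a squarefree monomial supported on $V(\sigma_i)$.

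First I would record the vertices of the product cell. Since $\sigma=\sigma_1\times\cdots\times\sigma_d$ is a cell of the product-of-simplices complex $X_{d,d+n-2}\cong Y^d_n$, its vertices are exactly the tuples $v=v_1\times\cdots\times v_d$ obtained by choosing one box $v_i\in\sigma_i$ on each row, as recorded in the staircase description. By Proposition~\ref{prop:realiz} and the discussion immediately following it, such a vertex is geometrically realized as $\sum_{i=1}^d(\mathbf{e}_{v_i-i+1}+\mathbf{e}_{v_i-i+2})$; this is precisely the covering rule, namely that the box $v_i$ on row $i$ covers the vertices $v_i-i+1$ and $v_i-i+2$ of $P_n$. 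Reading off the coordinate-induced label gives $\ell(v)=\prod_{i=1}^d x_{v_i-i+1}x_{v_i-i+2}$, where degrees accumulate when distinct rows contribute the same coordinate.

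Next, since the vertices of $\sigma$ range independently over the nonempty sets $\sigma_1,\ldots,\sigma_d$, I would apply Lemma~\ref{lemma:triv} with $s_i=\{\,x_{j-i+1}x_{j-i+2}\mid j\in\sigma_i\,\}$ to obtain
\[
\ell(\sigma)=\prod_{i=1}^d\,\mathrm{lcm}\{\,x_{j-i+1}x_{j-i+2}\mid j\in\sigma_i\,\}.
\]
Each factor is an lcm of degree-two squarefree monomials; because every variable occurs to at most the first power in each such monomial, the lcm is the squarefree monomial supported on the union of their supports, which is $\bigcup_{j\in\sigma_i}\{j-i+1,j-i+2\}=V(\sigma_i)$ by definition. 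Thus the $i$-th factor equals $\prod_{k\in V(\sigma_i)}x_k$, and multiplying over $i$ yields the claimed label $\prod_{i=1}^d\prod_{k\in V(\sigma_i)}x_k$.

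The argument is essentially bookkeeping, so the only delicate points, rather than a genuine obstacle, are matching the index conventions of the $-i+1,-i+2$ shift to the realization formula and keeping straight where degrees collapse versus accumulate. Within a single row the lcm collapses to a squarefree monomial on $V(\sigma_i)$, since each monomial there is squarefree and the lcm of squarefree monomials is the union of supports; across rows the product structure correctly accumulates degrees for coordinates covered by several $\sigma_i$, so no squarefreeness is asserted for $\ell(\sigma)$ itself. Verifying that the vertex set of the product cell is exactly $\sigma_1\times\cdots\times\sigma_d$, and that Lemma~\ref{lemma:triv} applies because each $\sigma_i$ is nonempty, completes the reduction.
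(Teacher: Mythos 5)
Your proposal is correct and follows essentially the same route as the paper: it expresses $\ell(\sigma)$ as the lcm over the product cell's vertices, factors it row by row via Lemma~\ref{lemma:triv}, and identifies each row's lcm of squarefree degree-two monomials as the squarefree monomial supported on $V(\sigma_i)$. The only difference is presentational — you spell out the squarefreeness observation that the paper leaves implicit in its final chain of equalities.
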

\begin{proof}
The label of a cell is the least common multiple of the labels of its vertices. By Lemma~\ref{lemma:triv} and the geometric realization of $Y^d_n$ with monomial labels given by the coordinates,
\[
\begin{array}{rcl}
\ell(\sigma) & = &  \textrm{{lcm}} ( \ell( \{j_1\} \times \cdots \times  \{j_d\}) \mid  \{j_1\} \times \cdots \times  \{j_d\}  \subseteq  \sigma_1\times\cdots\times\sigma_d  ) \\  
& = & \textrm{{lcm}} ( \ell( \{j_1\})  \cdots \ell(  \{j_d\}) \mid  j_1\in \sigma_1, \ldots, j_d \in \sigma_d  ) \\  
& = & \prod_{i=1}^d  \textrm{{lcm}} ( \ell( \{j\})   \mid  j \in \sigma_i  ) \\  
& = & \prod_{i=1}^d  \textrm{{lcm}} (  x_{j-i+1}x_{j-i+2}   \mid  j \in \sigma_i  ) \\  
& = & \prod_{i=1}^d  \prod_{k \in \cup_{j \in \sigma_i} \{ j-i+1,j-i+2 \}} x_k   \\  
& = & \prod_{i=1}^d  \prod_{k \in V(\sigma_i)} x_k.  \\  
\end{array}
\]
\end{proof}

\begin{proposition}\label{prop:labelOnYdn}
The coordinate monomial labeling $\ell$ is a poset map from the face poset of $Y^d_n$ to the monomials in $\mathbf{k}[x_1,\ldots,x_n]$ ordered by divisibility. Let $q$ be a monomial in $\mathbf{k}[x_1,\ldots,x_n]$. The fiber $\ell^{-1}(q)$ is a disjoint union of connected posets.  The poset dual of the connected posets is isomorphic to  products of  face posets of covering complexes of disjoint unions of paths. 
\end{proposition}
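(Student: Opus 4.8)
The plan is to treat the three assertions in increasing order of difficulty. The first is essentially formal: if $\sigma$ is a face of $\tau$ in $Y^d_n$, then the vertex set of $\sigma$ is contained in that of $\tau$, and since the label of a cell is the lcm of its vertex labels, $\ell(\sigma)$ divides $\ell(\tau)$; hence $\ell$ is order preserving. The second assertion, that $\ell^{-1}(q)$ is a disjoint union of connected posets, is automatic for any poset, so the real content is to identify those components, which is where I would spend the effort.

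The key structural step I would prove first is that the covering data is locally constant on each fiber: if $\sigma\le\tau$ are cells with $\ell(\sigma)=\ell(\tau)$, then $V(\sigma_i)=V(\tau_i)$ for every $i$. This follows from Proposition~\ref{prop:label}, since the label factors over the rows as $\prod_i\prod_{k\in V(\sigma_i)}x_k$; the inclusion $\sigma_i\subseteq\tau_i$ forces $V(\sigma_i)\subseteq V(\tau_i)$, so each row's factor of $\ell(\sigma)$ divides the corresponding factor of $\ell(\tau)$, and equality of the two total products forces equality row by row. Consequently the covering tuple $(V(\sigma_1),\ldots,V(\sigma_d))$ is constant along cover relations inside a fiber, hence constant on each connected component. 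Thus every component is contained in the set of cells with a fixed tuple $(V_1,\ldots,V_d)$, and it remains to show that this containment is an equality with an explicit product structure.

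The heart of the argument, and the step I expect to be the main obstacle, is to show that once $(V_1,\ldots,V_d)$ is fixed the rows decouple, despite the staircase constraint $\max\sigma_i<\min\sigma_{i+1}$ defining $Y^d_n=X_{d,d+n-2}$, which a priori couples consecutive rows. I would resolve this by noting that each $V_i$ is a disjoint union of intervals of length at least two (a union of dominoes $\{a,a+1\}$), and that inside an interval $[p,r]$ the leftmost vertex $p$ can be covered only by the edge $\{p,p+1\}$ and the rightmost vertex $r$ only by $\{r-1,r\}$ among edges internal to $V_i$. Hence the extreme box-coordinates $\min\sigma_i$ and $\max\sigma_i$ are forced by $V_i$ alone, and translating $\max\sigma_i<\min\sigma_{i+1}$ into these forced extremes shows the staircase condition depends only on $(V_i,V_{i+1})$ and places no constraint on the remaining interior boxes. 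Therefore the set of cells with covering tuple $(V_1,\ldots,V_d)$ is exactly the product, over $i$, of the poset of edge sets $\sigma_i$ with $V(\sigma_i)=V_i$ ordered by inclusion; since this product has a maximum (use, in each row, all edges internal to $V_i$) it is connected, so it is precisely one connected component of the fiber.

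Finally I would identify each factor. An edge set $\sigma_i$ satisfies $V(\sigma_i)=V_i$ exactly when its edges are internal to $V_i$ and cover every vertex of $V_i$, i.e.\ when $\sigma_i$ is an edge cover of the graph $G_i$ with vertex set $V_i$ and edges those of $P_n$ internal to $V_i$, which is a disjoint union of paths. By Definition~\ref{def:coveringComplex}, a set of edges is an edge cover of $G_i$ iff its complement lies in ${\tt Cov}(G_i)$, so the map $\sigma_i\mapsto E(G_i)\setminus\sigma_i$ is an anti-isomorphism from the edge-cover poset (under inclusion) onto the face poset of ${\tt Cov}(G_i)$. Taking the poset dual of the whole component therefore converts the product of edge-cover posets into $\prod_{i=1}^d$ of the face posets of ${\tt Cov}(G_i)$, each $G_i$ a disjoint union of paths, which is the claim. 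The order reversal here is exactly why the dual, and not the component itself, matches the covering complexes: enlarging a cell corresponds to deleting edges from a covering-complex face.
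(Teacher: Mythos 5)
Your proof is correct and follows essentially the same route as the paper's: the lcm definition makes $\ell$ a poset map, Proposition~\ref{prop:label} together with $V(\tau_i)\subseteq V(\sigma_i)$ forces the covering tuple $(V_1,\ldots,V_d)$ to be constant on each component, and the cells with fixed tuple are identified row by row with the dual of the face poset of ${\tt Cov}(P_n[V_i])$ via complementation of edge sets. The one place you go beyond the paper is the verification that the staircase constraint $\max\sigma_i<\min\sigma_{i+1}$ depends only on $(V_i,V_{i+1})$ because the extreme boxes of each row are forced, so the rows genuinely decouple into a product; the paper leaves this implicit, and your argument for it is sound.
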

\begin{proof}
Monomial labelings are defined by the least common multiple of the labels of the vertices, turning them into a poset map.

Let $\sigma=\sigma_1\times\cdots\times \sigma_d \supseteq \tau=\tau_1\times\cdots\times \tau_d$ be two comparable cells in the same fiber. By Proposition~\ref{prop:label},
\[ \prod_{i=1}^d\prod_{k\in V(\sigma_i)}x_k = \prod_{i=1}^d\prod_{k\in V(\tau_i)}x_k. \]
The inclusions $V(\sigma_i) \supseteq V(\tau_i)$ for all $i$ follows from $\sigma \supseteq \tau.$ Together this gives that $V(\sigma_i) = V(\tau_i)$ for all $i.$ Thus, in a connected component of the fiber, not only the label is common, but also $V(\sigma_i)$ for all $\sigma.$

Fix a connected component of the fiber and set $V_i=V(\sigma_i)$ for all $\sigma$ in it. Ordering by inclusion, there is a maximal $\sigma_i,$ denoted by $\tilde{\sigma}_i,$ such that $V(\sigma_i)=V_i.$ 
Let $\pi$ be the map from $E(P_n)$ to $V(P_n)$ that sends $j(j+1)$ to $j$ and extend the map $\pi$ to the domain of subsets of $E(P_n).$
Then
\[
\begin{array}{rcl}
\{ \sigma_i  \mid V_i = V(\sigma_i) \} & = & \left\{ \sigma_i  \mid  V_i =  \cup_{j \in \sigma_i} \{ j-i+1,j-i+2 \}  \right\}  \\
 & = & \left\{ \tilde{\sigma}_i \setminus  \pi(\xi)  \mid  \xi \in  {\tt Cov}(P_n[V_i])  \right\}  \\
\end{array}
\]
and $\{ \sigma_i  \mid V_i = V(\sigma_i) \}$ is isomorphic to the dual of the face poset of the covering complex of $P_n[V_i],$ a disjoint union of paths.
\end{proof}

\section{Minimal cellular resolutions}

\begin{lemma}\label{lemma:boundary}
Let $P$ be the face poset of a regular CW-complex $X,$ $Q$ a poset, $\phi:P\rightarrow Q$ a poset map, 
\[ \phi^{-1}(q)= \bigsqcup_{i=1}^{n_q} P_{q,i}, \]
and $M_{q,i}$ an acyclic matching on $P_{q,i}$ with at most one critical cell for each $q\in Q$ and $1\leq i \leq n_q.$
Then $M=\cup_{q\in Q}\cup_{i=1}^{n_q} M_{q,i}$ is an acyclic matching on $P.$ 
By the main theorem of discrete Morse theory, $X$ has the same homology as a CW-complex $\tilde{X}$ whose cells are the critical cells of the acyclic matching $M$, but with new boundary maps. If $\sigma$ and $\tau$ are cells in $\tilde{X}$ corresponding to critical cells in the same fiber $\phi^{-1}(q)$ and $\partial$ is the boundary map on $\tilde{X},$ then $\sigma \cdot \partial \tau = 0.$
\end{lemma}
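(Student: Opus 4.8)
\section*{Proof plan}

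The plan is to recognize this as an instance of the patchwork (or cluster) principle for acyclic matchings, together with a refinement tracking the Morse differential. Orient the Hasse diagram of $P$ by directing every covering relation downward except the pairs matched by $M$, which are directed upward; call this directed graph $D_M$. The single principle driving everything is that $\phi$ is monotone along the arcs of $D_M$: along a matched (upward) arc the value of $\phi$ is \emph{constant}, because each matched pair lies inside one fiber $\phi^{-1}(q)$, and along an unmatched (downward) arc $\phi$ is \emph{non-increasing}, because $\phi$ is a poset map. A second, finer observation is that if two comparable cells lie in the same fiber then they lie in the same connected component $P_{q,i}$, and that every matched pair lies in a single $P_{q,i}$.

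First I would check that $M$ is a matching. Since the sets $P_{q,i}$ partition the cells of $P$ (each cell lies in exactly one fiber and, within it, in exactly one component), the matchings $M_{q,i}$ have pairwise disjoint supports, so their union is a matching, whose matched pairs are covering relations of $P$. For acyclicity, suppose $D_M$ contained a directed cycle. Walking around it, $\phi$ is constant on the upward arcs and non-increasing on the downward arcs, so it is non-increasing overall; since the walk returns to its start, $\phi$ is in fact constant, equal to some $q$, along the whole cycle. Consecutive cells of the cycle are comparable and lie in $\phi^{-1}(q)$, hence in a common component, so the entire cycle lives in a single $P_{q,i}$. This exhibits a directed cycle in the restriction of $D_M$ to $P_{q,i}$, contradicting the acyclicity of $M_{q,i}$. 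Hence $M$ is acyclic.

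The homotopy statement in the middle is then immediate from the main theorem of discrete Morse theory applied to the acyclic matching $M$ on the face poset of the regular CW-complex $X$: the critical cells of $M$ are the cells of $\tilde{X}$, and $X \simeq \tilde{X}$. For the vanishing of $\sigma \cdot \partial \tau$, recall that this coefficient is computed as a signed count of gradient paths from $\tau$ to $\sigma$, that is, directed paths in $D_M$ from the critical $p$-cell $\tau$ to the critical $(p-1)$-cell $\sigma$, alternating between downward boundary arcs and upward matched arcs. Along any such path $\phi$ is again non-increasing, and since both endpoints have $\phi$-value $q$, it is constant equal to $q$; as before the path then stays inside one component $P_{q,i}$. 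But $\tau$ and $\sigma$ are distinct critical cells (they have different dimensions) lying in $\phi^{-1}(q)$, and each component contains at most one critical cell, so they lie in different components. No gradient path can therefore exist, the signed count is empty, and $\sigma \cdot \partial \tau = 0$.

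The routine parts are the bookkeeping; the step I expect to require the most care is pinning down the two structural facts that make the monotonicity argument close up, namely that matched arcs keep $\phi$ exactly constant while unmatched downward arcs can only decrease $\phi$ (so that a cycle or a path forces constancy), and that constancy of $\phi$ together with comparability of consecutive cells confines a cycle or path to a single component $P_{q,i}$. One should also be sure to use the correct gradient-path description of the Morse boundary map, since the last claim rests on there being \emph{no} such path between two critical cells of the same fiber, rather than on a cancellation of signs.
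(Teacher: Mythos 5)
Your proposal is correct and follows essentially the same route as the paper: the gradient-path description of the Morse differential, monotonicity of $\phi$ along such paths forcing them into a single fiber and hence a single component $P_{q,i}$, and the contradiction with that component containing at most one critical cell. The only difference is that you prove the acyclicity of the patched matching $M$ directly via the same monotonicity argument, whereas the paper simply cites Lemma 4.2 of Jonsson's book for that step.
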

\begin{proof}
Lemma 4.2 in \cite{jonsson08} states that $M$ is an acyclic matching. The boundary maps in $\tilde{X}$ are calculated from gradient paths \cite{forman98}. A gradient path in $P$ is a list $\tau_1,\sigma_1,\tau_2,\sigma_2,\ldots,\tau_m,\sigma_m$ of cells of $X$ such that $\sigma_i$ is a codimension one
cell on the boundary of $\tau_i$ for all $i,$ and $\{\sigma_i,\tau_{i+1}\} \in M$ for $1 \leq i <m.$ If $\tau_1$ and
$\sigma_m$ are critical cells, then $\sigma_m \cdot \partial \tau_1 = 0$ in $\tilde{X}$ if there are no gradient paths from $\tau_1$ to $\sigma_m.$

Assume that there is a gradient path $\tau=\tau_1,\sigma_1,\tau_2,\sigma_2,\ldots,\tau_m,\sigma_m=\sigma$ in $P.$ In the poset $Q,$ $\phi(\tau_i)\geq \phi(\sigma_i)$ since $\phi$ is a poset map.
Matched cells are in the same fiber, providing equalities $\phi(\sigma_i)=\phi(\tau_{i+1}).$ It is stated that $\phi(\tau)=\phi(\sigma)=q$ and thus, all cells in the gradient path are in the fiber $\phi^{-1}(q).$ 
All of the gradient path is in some $P_{q,j}$ since $\phi^{-1}(q)$ is a disjoint union of posets, 
but $P_{q,j}$ only contains at most one critical cell. This contradicts the assumption that there is a gradient path.
\end{proof}

\begin{theorem}\label{thm:main}
There is a minimal cellular resolution of $I^d_{P_n}.$
\end{theorem}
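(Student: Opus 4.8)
The plan is to assemble the minimal resolution from the machinery developed in the preceding sections, combining the Morse-theoretic reduction from $Z^d_n$ down to $Y^d_n$ with a further fiberwise reduction of $Y^d_n$ itself. First I would invoke Theorem~\ref{thm:Ysupports} to record that $Y^d_n$ already supports a cellular resolution of $I^d_{P_n}$; this is the non-minimal starting point. The strategy for minimality is then to apply Lemma~\ref{lemma:boundary} with $X=Y^d_n$, with $Q$ the poset of monomials under divisibility, and with $\phi=\ell$ the coordinate monomial labeling. The key structural input is Proposition~\ref{prop:labelOnYdn}, which tells us precisely what each fiber $\ell^{-1}(q)$ looks like: it is a disjoint union of connected posets, and the dual of each connected poset is a product of face posets of covering complexes of disjoint unions of paths.

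The central step is to produce, on each piece $P_{q,i}$ of each fiber, an acyclic matching with \emph{at most one} critical cell, so that the hypotheses of Lemma~\ref{lemma:boundary} are satisfied. Here I would feed Proposition~\ref{prop:coveringDisjointPath} through the duality in Proposition~\ref{prop:labelOnYdn}: since each connected component of a fiber is dual to a product of face posets ${\tt Cov}(P_{n_1})\ast\cdots\ast{\tt Cov}(P_{n_t})$, and Proposition~\ref{prop:coveringDisjointPath} furnishes an acyclic matching with at most one critical cell on such a covering complex of a disjoint union of paths, dualizing this matching gives an acyclic matching with at most one critical cell on $P_{q,i}$. (An acyclic matching on a poset is equally an acyclic matching on its dual, since matchings and acyclicity are preserved under reversing all order relations.) Collecting these across all $q$ and all $i$ and applying Lemma~\ref{lemma:boundary} yields a global acyclic matching $M$ on the face poset of $Y^d_n$ whose Morse complex $\tilde{X}$ has the same homology and supports a cellular resolution via Theorem~\ref{thm:alebraicmorsetheory}, since $M$ only matches cells of equal label by construction.

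Finally I would argue minimality. By the final theorem of Section~4, a cellular resolution is minimal exactly when no two cells properly contained in one another carry the same label; equivalently, the differential has no nonzero scalar entries between critical cells in a common fiber. This is exactly the conclusion $\sigma\cdot\partial\tau=0$ guaranteed by Lemma~\ref{lemma:boundary} for critical cells $\sigma,\tau$ lying in the same fiber $\phi^{-1}(q)$. Since critical cells in \emph{distinct} fibers automatically have distinct labels, no incidence between equally labeled critical cells survives, and the resolution supported by $\tilde{X}$ is minimal.

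The main obstacle I anticipate is the dualization step: verifying that the at-most-one-critical-cell matching on the covering complex transports cleanly through the poset anti-isomorphism of Proposition~\ref{prop:labelOnYdn} to the actual cells of $Y^d_n$, and—more delicately—checking that the \emph{product} structure in Proposition~\ref{prop:labelOnYdn} (a product of face posets, rather than a single covering complex) is handled by the join-to-product matching bookkeeping already used in Proposition~\ref{prop:coveringDisjointPath}. One must make sure the combination of matchings across the product factors still yields at most one critical cell per connected component, which is where the multiplicativity $c_1c_2\cdots c_t$ of critical-cell counts under joins, together with each $c_i\le 1$, does the work.
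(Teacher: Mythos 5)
Your proposal is correct and follows essentially the same route as the paper's own proof: start from Theorem~\ref{thm:Ysupports}, use Proposition~\ref{prop:labelOnYdn} to identify the fibers of the labeling map, feed Proposition~\ref{prop:coveringDisjointPath} through the duality and product structure to get at most one critical cell per connected component, and conclude via Lemma~\ref{lemma:boundary} and Theorem~\ref{thm:alebraicmorsetheory} that the Morse complex supports a minimal resolution. The dualization and product-of-posets points you flag as potential obstacles are exactly the ones the paper also handles (tersely) in its proof, so nothing is missing.
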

\begin{proof}
According to Theorem~\ref{thm:Ysupports} the coordinate labeled complex $Y^d_n$ supports a cellular resolution of $I^d_{P_n}$. The structure of the cells in the face poset with a particular fixed monomial label is given by Proposition~\ref{prop:labelOnYdn}. They are a disjoint union of connected posets and the duals of the connected posets are isomorphic to  products of face posets of covering complexes of disjoint unions of paths. 
By Proposition~\ref{prop:coveringDisjointPath} there are acyclic matchings with at most one critical cell on face posets of covering complexes of disjoint unions of paths. Taking products of such posets and their duals also yields an acyclic matching with at most one critical cell.

We now construct acyclic matchings this way for each monomial label. Since the map giving labels is a poset map, the composed matching on all of the face poset of $Y^d_n$ is also an acyclic matching. Passing from $Y^d_n$ to its Morse complex $\tilde{Y}^d_n,$ that also supports a cellular resolution of $I^d_{P_n}$ by Theorem~\ref{thm:alebraicmorsetheory}, we get a cell complex whose labels drop when passing to the boundary of cells according to Lemma~\ref{lemma:boundary}. This shows that the resolution supported by $\tilde{Y}^d_n$ is minimal.
\end{proof}

A cell $\sigma$ is \emph{label maximal} if it is not contained in a cell with the same label. The face poset of $Y^d_n$ decomposes into disjoint parts consisting of cells with the same label, each part contains a unique label maximal cell. Let $\sigma$ be a label maximal cell, the poset of cells contained in $\sigma$ and having label $\ell(\sigma)$ is $F_{\sigma}$. A label maximal cell $\sigma$ is \emph{critical inducing} if $F_{\sigma}$ contains a critical cell. The goal is to count the number of label maximal critical inducing cells and keeping track of the dimension of the critical cell that are induced together with the label. The number $A(\sigma)=|E(\sqcup_{i=1}^dP_n[V(\sigma_i)])|$ will be important.

Let $\sigma$ be a critical inducing label maximal cell. The dimension of $\sigma$ is $|\sqcup_{i=1}^d \sigma_i|-d=|E(\sqcup_{i=1}^dP_n[V(\sigma_i)])|-d=A(\sigma)-d$. The poset $F_\sigma$ is dual to the product of the posets ${\tt Cov}(P_n[V(\sigma_i)])$, this product can be realised as the face poset of ${\tt Cov}(\sqcup_{i=1}^dP_n[V(\sigma_i)])$. The complex ${\tt Cov}(\sqcup_{i=1}^dP_n[V(\sigma_i)])$ has exactly one critical cell, let $D_\sigma$ be the dimension of the critical cell. Now the dimension of the critical cell in $F_\sigma$ is $A(\sigma)-(D_\sigma+1)$.

Let $N(\sigma)$ be the number of connected components of $\sqcup_{i=1}^dP_n[V(\sigma_i)]$ and let $N_2(\sigma)$ be the number of these components that have $2\mod 3$ vertices. The cell $\sigma$ is critical inducing and this implies that $\sqcup_{i=1}^dP_n[V(\sigma_i)]$ have no connected component with $1\mod 3$ vertices.

Proposition~\ref{prop:coveringPath} proves the formula $D_\sigma=(A(\sigma)+N_2(\sigma)-2N(\sigma)-3)/3$. The critical cell of $F_\sigma$ has dimension $(2A(\sigma)+2N(\sigma)-3d-N_2(\sigma)-3)/3$.

The numbers $B(\sigma)=(N(\sigma)+N_2(\sigma)+A(\sigma))/3$ and $C(\sigma)=A(\sigma)+N(\sigma)$ will be useful. The number $B(\sigma)$ is an integer as $\sqcup_{i=1}^dP_n[V(\sigma_i)]$ do not have any components with $1\mod 3$ vertices. The dimension of the critical cell in $F_\sigma$ is $C(\sigma)-B(\sigma)-d$, the label of the cell is of degree $C(\sigma)$.

The dimension and label of the critical cell in $F_\sigma$ is determined by the numbers $B(\sigma)$ and $C(\sigma)$, these numbers are determined by the combinatorics of the graph $\sqcup_{i=1}^dP_n[V(\sigma_i)]$. The combinatorics of the graph $\sqcup_{i=1}^dP_n[V(\sigma_i)]$ can be read of from the box diagram of $\sigma$.

Form a graph on the set of boxes in the box diagram of $\sigma$ by letting two boxes on the same row be adjacent in the graph if they are adjacent in the diagram, this graph is isomorphic to the line graph of $\sqcup_{i=1}^dP_n[V(\sigma_i)])$.

To count the number of critical inducing label maximal cells $\sigma$ with given $(B(\sigma),C(\sigma))$ it is convenient to translate the set of these cells into a particular set of $01$-strings. Consider the $d\times n-1$ matrix $M(\sigma)$ obtained from the box diagram of $\sigma$ by replacing every box by a $1$ and every space by a $0$. Let $L(\sigma)$ be the string of length $d(n-1)$ obtained by flattening the matrix $M(\sigma)$.

Let a maximal substring of zeroes in a $01$-string be \emph{interior} if it is surrounded by ones. 

\begin{proposition}\label{prop:string}
The map $\sigma\rightarrow L(\sigma)$ is a bijection between the set of critical inducing label maximal cells $\sigma$ with  $(N,N_2,A)=(N(\sigma),N_2(\sigma),A(\sigma))$ and the set $S$ of $01$-strings of length $d(n-1)$ satisfying:

\begin{itemize}
\item The strings in $S$ have exactly $d-1$ interior maximal substrings of zeroes of length at least $n-2$.
\item The strings in $S$ have exactly $A$ ones.
\item The strings in $S$ have exactly $N$ maximal substrings consisting of ones and of these $N$ substrings exactly $N_2$ consists of $1\mod 3$ ones.
\end{itemize}
\end{proposition}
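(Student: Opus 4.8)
The plan is to read both sides off the box diagram of $\sigma$ and to show that the row-by-row flattening $L$ and its obvious inverse — cutting a string into $d$ consecutive blocks of length $n-1$ — are mutually inverse. First I would fix the dictionary used throughout. For a label-maximal cell $\sigma=\sigma_1\times\cdots\times\sigma_d$, Proposition~\ref{prop:labelOnYdn} identifies $\sigma_i$ with the full set of edges of $P_n$ lying inside $V(\sigma_i)$; hence row $i$ of $M(\sigma)$ has a $1$ in column $c$ exactly when the edge $\{c,c+1\}$ belongs to $P_n[V(\sigma_i)]$, and the $1$'s of row $i$ split into consecutive blocks, one per connected component of $P_n[V(\sigma_i)]$. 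Three translations follow: (a) the number of $1$'s equals $\sum_i|E(P_n[V(\sigma_i)])|=A(\sigma)$; (b) by the line-graph description recalled before the statement, maximal runs of $1$'s in $L(\sigma)$ correspond bijectively to connected components of $\sqcup_{i=1}^d P_n[V(\sigma_i)]$, a run of length $\ell$ matching a path with $\ell$ edges and $\ell+1$ vertices; (c) across the boundary between rows $i$ and $i+1$ the string carries a zero block of length $(n-1-r_i)+(l_{i+1}-1)$, where $r_i$ and $l_{i+1}$ are the last and first occupied columns, and the requirement that $\sigma$ be a cell of $X_{d,d+n-2}$, namely $\max\sigma_i<\min\sigma_{i+1}$, is precisely $l_{i+1}\ge r_i$, i.e. this zero block has length $\ge n-2$.

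Granting (a)--(c), the forward inclusion $L(\sigma)\in S$ is a direct check. Every zero run that stays inside one row has length at most $(n-1)-2=n-3<n-2$, while each of the $d-1$ between-row runs has length $\ge n-2$ by (c); so there are exactly $d-1$ interior runs of length $\ge n-2$, giving the first condition, and (a) gives the second. For the third, (b) says the number of $1$-runs is $N(\sigma)=N$, and a run of length $\ell$ sits in a component with $\ell+1$ vertices, which is counted by $N_2$ iff $\ell+1\equiv 2$, i.e. $\ell\equiv 1\pmod 3$; thus exactly $N_2$ runs have length $\equiv 1\pmod 3$. I would record two further equivalences here for reuse: label-maximality forces every within-row zero run to have length $\ge 2$, since distinct components of a single factor are separated by at least one uncovered vertex of $P_n$ and hence by at least two columns; and, by Proposition~\ref{prop:coveringPath} together with the join formula in the proof of Proposition~\ref{prop:coveringDisjointPath}, the critical-inducing hypothesis is equivalent to $\sqcup_i P_n[V(\sigma_i)]$ having no component with $\equiv 1\pmod 3$ vertices, i.e. to $L(\sigma)$ having no $1$-run of length $\equiv 0\pmod 3$.

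Injectivity is immediate: cutting $L(\sigma)$ into $d$ consecutive blocks of length $n-1$ recovers $M(\sigma)$, hence every $\sigma_i$, hence $\sigma$. For surjectivity I would run the cut in reverse on a given $w\in S$, read $\sigma_i$ off row $i$, and verify the decoded $\sigma$ is a critical-inducing label-maximal cell with invariants $(N,N_2,A)$. The first condition on $w$ must first be promoted to validity of $\sigma$ — all rows nonempty and $l_{i+1}\ge r_i$ for each $i$ — by a short run-counting argument: every within-row $0$-run is shorter than $n-2$, so the $d-1$ long runs are forced to sit one per row boundary, which both excludes empty rows and makes each between-row gap $\ge n-2$. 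With $\sigma$ now a genuine cell, (a)--(c) reproduce $A$, $N$, $N_2$ through the same dictionary, and the two equivalences of the previous paragraph read label-maximality and the critical-inducing property back out of the string. Since $L$ and the cut are inverse bijections on matrices, this finishes both surjectivity and the claim.

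The main obstacle is exactly this reversibility. Dictionary (a)--(c) is transparent for a label-maximal cell, but for the inverse one must be certain that a string meeting the listed conditions cannot decode to a degenerate or non-admissible configuration. The work concentrates in (i) the gap-length bookkeeping that upgrades ``exactly $d-1$ interior runs of length $\ge n-2$'' to ``$d$ nonempty rows forming a valid staircase'', ruling out empty rows and boundary-straddling runs that are accidentally short or that merge $1$-runs across a boundary; and (ii) pinning label-maximality to the exclusion of length-one within-row runs and the critical-inducing property to the exclusion of $1$-runs of length $\equiv 0\pmod 3$, so that these cell-side properties are faithfully encoded on the string side and, together with the three displayed conditions, cut out exactly the image of $L$. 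The line-graph identification recalled before the statement carries the run-to-component correspondence in (b); everything else is careful counting of runs against the two thresholds $2$ and $n-2$.
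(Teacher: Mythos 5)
Your proposal follows the same route as the paper's own proof: flatten the box diagram, recover it by cutting the string into $d$ consecutive rows of length $n-1$, translate the number of ones, the one-runs, and the $1 \bmod 3$ count through the line-graph dictionary, and handle the staircase condition by showing that each of the $d-1$ long interior zero runs must straddle exactly one row boundary. Your gap-length bookkeeping (between-row runs of length $(n-1-r_i)+(l_{i+1}-1)\ge n-2$ versus within-row interior runs of length at most $n-3$) is the same argument the paper compresses into the inequality $d(n-2)+(d+1)>d(n-1)$, and your injectivity and validity-of-the-decoded-diagram steps coincide with the paper's.

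The gap is in surjectivity onto $S$ as displayed, and your own text exposes it without resolving it. You correctly observe that label-maximality of the decoded cell is equivalent to every within-row zero run having length at least $2$, and that the critical-inducing property is equivalent to no one-run having length $\equiv 0 \pmod 3$; but neither condition is implied by the three conditions defining $S$, so the plan to ``read label-maximality and the critical-inducing property back out of the string'' for an arbitrary $w\in S$ cannot succeed. Concretely, for $n=8$, $d=1$ the string $1011000$ satisfies all three conditions with $(N,N_2,A)=(2,1,3)$, yet decodes to $\sigma_1=\{1,3,4\}$, whose covered vertex set is the interval $\{1,\dots,5\}$, so the cell is not label-maximal; similarly $0111000$ lies in $S$ for $(N,N_2,A)=(1,0,3)$ but decodes to a label-maximal cell whose single component has $4\equiv 1\pmod 3$ vertices and is therefore not critical inducing. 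Your closing claim that the two extra exclusions ``together with the three displayed conditions cut out exactly the image of $L$'' is in effect an admission that $S$ as displayed is strictly larger than the image. The paper's proof shares this defect (its surjectivity check only verifies that the line breaks land in the long zero runs), and the two missing conditions are used silently in the count of Lemma~\ref{lemma:countcell3}, where each short interior gap is given a baseline of two zeros and each one-run a baseline of $1$ or $2$ ones modulo $3$. The fix is to add your two conditions to the definition of $S$; with $S$ as literally stated, the surjectivity step fails.
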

\begin{proof}
The map is an injection into some set $S$ of $01$-strings as the box diagram can be recovered by inserting $d-1$ line breaks to get the desired matrix that encode the diagram.

The strings obtained from cells $\sigma$ has at least $d-1$ interior maximal strings of length at least $n-2$ as the ending zeroes of row $i$ and beginning zeroes of row $i+1$ in $M(\sigma)$  together gives $d-1$ such subsequences. There can not be more interior strings of this type as $d(n-2)+(d+1)>d(n-1)$.

The box diagram encodes the line graph of $\sqcup_{i=1}^dP_n[V(\sigma_i)]$ and this proves the last two properties of $S$.

To prove that the map is surjective it is enough to check that the matrix recovered by inserting the $d-1$ line breaks encode a valid box diagram.
A string encode a box diagram of each line break touch one of the interior maximal substrings of zeros of length at least $n-2$.

Each maximal interior substring zeroes of length at least $n-2$ touch a line break as the rows have length $n-1$, a line break can not touch two different of these strings and we are done as there are exactly $d-1$ strings of this type.
\end{proof}

\begin{lemma}\label{lemma:countcell3}
The number of cells $\sigma$ with $(N(\sigma),B(\sigma),C(\sigma))=(N,B,C)$ is

\[{N\choose 3B-C}{N-1\choose d-1}{n+3d-C-2\choose N}{B-1\choose N-1}.\]
\end{lemma}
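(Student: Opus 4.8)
The plan is to use Proposition~\ref{prop:string} to reduce the statement to a parameter translation together with a purely combinatorial count. From the definitions $B(\sigma)=(N(\sigma)+N_2(\sigma)+A(\sigma))/3$ and $C(\sigma)=A(\sigma)+N(\sigma)$ one solves $A(\sigma)=C-N$ and $N_2(\sigma)=3B-C$, so fixing $(N,B,C)$ is the same as fixing $(N,N_2,A)=(N,3B-C,C-N)$. Hence the cells counted by the lemma are exactly the critical inducing label maximal cells $\sigma$ with $(N(\sigma),N_2(\sigma),A(\sigma))=(N,3B-C,C-N)$, which Proposition~\ref{prop:string} puts in bijection with the string set $S$; equivalently I will count these cells directly.

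Rather than count $S$ in the string model, where the zero-runs carry awkward bounds, I would record a cell $\sigma=\sigma_1\times\cdots\times\sigma_d$ through its box diagram in the original simplex coordinates: the $N$ maximal runs of boxes, read in row order, are intervals of consecutive integers inside $\{1,\dots,n+d-2\}$, partitioned into the $d$ rows. Three structural facts drive the count. First, each run has length $\equiv 1$ or $2\pmod 3$, exactly $N_2$ of them $\equiv 1$, because $\sigma$ critical inducing forbids components of $\sqcup_i P_n[V(\sigma_i)]$ on $1\bmod 3$ vertices. Second, two runs in the same row are separated by a gap of at least two empty columns: label maximality gives $\sigma_i=\{j:\{j-i+1,j-i+2\}\subseteq V(\sigma_i)\}$, so a one-column gap would force both endpoints of the skipped edge into $V(\sigma_i)$ and hence make that edge a box, a contradiction. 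Third, consecutive rows only require $\max\sigma_i<\min\sigma_{i+1}$, i.e.\ a cross-row gap $\ge 0$, and the leading and trailing gaps are $\ge 0$. Finally I would check that the windows $\sigma_i\subseteq\{i,\dots,n+i-2\}$ are automatic once the rows are nonempty and ordered, by counting the $\ge i-1$ boxes below $\min\sigma_i$ and the $\ge d-i$ boxes above $\max\sigma_i$.

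With this model the count factors into four independent choices. Choosing which $N_2$ of the $N$ runs are the $\equiv1\pmod 3$ ones gives $\binom{N}{3B-C}$. Partitioning the $N$ ordered runs into $d$ nonempty consecutive rows gives $\binom{N-1}{d-1}$. Writing each run length as its minimum ($1$ or $2$) plus a multiple of $3$ and distributing the total surplus among the $N$ runs gives $\binom{B-1}{N-1}$ by stars and bars, using $A=2N-N_2+3(B-N)$ so that the surplus equals $B-N$. The runs and gaps occupy total length $n+d-2$, so the positions are fixed by distributing the slack $n+d-2-A$ among the $N+1$ gaps, with the $N-d$ within-row gaps required to be $\ge2$ and the other $d+1$ gaps $\ge0$; subtracting the minima and applying stars and bars yields $\binom{(n+d-2-A-2(N-d))+N}{N}=\binom{n+3d-C-2}{N}$. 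Multiplying the four factors gives the claimed formula, the factorization being legitimate because the total-length constraint couples lengths and gaps only through the fixed quantities $A$ and $n+d-2-A$.

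The main obstacle I expect is the gap count, for two reasons. The first is the within-row separation: the naive reading of "distinct maximal runs of ones" suggests gaps $\ge 1$, which overcounts, and one must see that induced-subgraph connectivity forces $\ge 2$; this single correction is exactly what turns the wrong top $n+2d-2-A$ into the correct $n+3d-C-2$, the two differing by $N-d$. The second is justifying that the stars-and-bars has no binding upper bounds: a priori a within-row gap is at most $n-3$ and the boundary and cross-row gaps are bounded too, but the window argument of the second paragraph shows that every admissible slant distribution already respects these bounds, so the single binomial is exact rather than an inclusion–exclusion sum. Recording the independence of the four choices carefully is the final point that makes the answer a clean product.
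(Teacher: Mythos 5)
Your argument is correct and is essentially the paper's own proof: the same four independent choices (which of the $N$ runs of boxes have length $1 \bmod 3$, where the $d-1$ row breaks fall among the $N-1$ inter-run slots, how the surplus length $3(B-N)$ is spread over the runs in multiples of three, and how the remaining slack $n+3d-N-C-2$ is spread over the $N+1$ gaps) produce the same four binomial factors, and your box-diagram coordinates of total width $n+d-2$ are just the unflattened version of the length-$d(n-1)$ string of Proposition~\ref{prop:string}, shifting each cross-row gap minimum from $n-2$ to $0$ without changing any count. Your explicit derivation that label maximality forces within-row gaps of length at least two is a condition the paper's zero-count uses implicitly but does not state in the description of the set $S$, so spelling it out is a genuine (if small) improvement in rigor rather than a departure in method.
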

\begin{proof}
The proof goes by proving that this is the size of the corresponding set $S$ in Proposition~\ref{prop:string}. The relations $N_2(\sigma)=3B(\sigma)-C(\sigma)$ and $A(\sigma)=C(\sigma)-N(\sigma)$ are useful in the following argument.

The integer $N$ is the number of maximal substrings consisting of ones in the stings in $S$. The integer $C$ is the total number of ones in the string plus the number of maximal substrings consisting of ones in the strings in $S$. The integer $3B-C$ is the number maximal strings of ones of length $1\mod3$.

The number of ways to distribute which sequences of ones are of length $1\mod3$ is ${N\choose 3B-C}.$
There are $N-1$ interior spaces and $d-1$ of them should have at least length $n-2$. This can be done in ${N-1\choose d-1}$ ways.
There are $n+3d-N-C-2$ remaining zeroes to distribute in $N+1$ subsequences. This can be done in ${n+3d-C-2\choose N}$ ways.
What remains is to distribute the remaining $3B-3N$ ones into the $N$ maximal substrings of ones. To not change the lengths $\mod 3$ this is done by multiples of three, this can be done in ${B-1\choose N-1}$ ways.
\end{proof}

\begin{lemma}\label{lemma:countcell4}
The number of cells $\sigma$ with $(B(\sigma),C(\sigma))=(B,C)$ is
\[{n+3d-C-2\choose 3B-C}  {n+2d-2B-2\choose C-2B}   {B-1\choose d-1}. \]

\end{lemma}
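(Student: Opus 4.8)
The plan is to obtain the count of cells with $(B(\sigma),C(\sigma))=(B,C)$ by summing, over all admissible $N$, the count of cells with $(N(\sigma),B(\sigma),C(\sigma))=(N,B,C)$ given by Lemma~\ref{lemma:countcell3}. The cells with $(B,C)$ fixed partition according to the value of $N(\sigma)$, and once $B$ and $C$ are fixed the quantity $N_2(\sigma)=3B-C$ is already determined while $N$ stays free; hence the total equals $\sum_N {N\choose 3B-C}{N-1\choose d-1}{n+3d-C-2\choose N}{B-1\choose N-1}$, and the whole lemma reduces to evaluating this sum of products of four binomial coefficients in closed form.

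To evaluate it I would first abbreviate $N_2=3B-C$ and $m=n+3d-C-2$, so the summand reads ${N\choose N_2}{N-1\choose d-1}{m\choose N}{B-1\choose N-1}$. The key step is to regroup the binomials using the subset-of-a-subset identities ${m\choose N}{N\choose N_2}={m\choose N_2}{m-N_2\choose N-N_2}$ and ${B-1\choose N-1}{N-1\choose d-1}={B-1\choose d-1}{B-d\choose N-d}$. This pulls the two $N$-free factors ${m\choose N_2}$ and ${B-1\choose d-1}$ outside the sum, leaving $\sum_N {m-N_2\choose N-N_2}{B-d\choose N-d}$. Rewriting the second factor as ${B-d\choose B-N}$ puts the sum into Vandermonde form, and Vandermonde's convolution collapses it to ${m+B-d-N_2\choose B-N_2}$.

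Finally I would back-substitute, using $m+B-d-N_2=n+2d-2B-2$ and $B-N_2=C-2B$, so the surviving binomial is exactly ${n+2d-2B-2\choose C-2B}$; collecting the three factors reproduces the asserted formula ${n+3d-C-2\choose 3B-C}{n+2d-2B-2\choose C-2B}{B-1\choose d-1}$.

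The argument is routine once the regrouping is found, so the only real obstacle is spotting the two subset-of-a-subset reindexings that expose a Vandermonde convolution; without them the fourfold sum looks intractable. A minor technical point worth recording is that the sum may be taken over all integers $N$, since every binomial coefficient vanishes outside the range where the associated combinatorial quantity is meaningful, so no boundary terms are lost when applying Vandermonde. As an alternative to this algebraic route, one could re-run the string count of Proposition~\ref{prop:string} with $N$ left free and distribute the ones and the interior and exterior zeros directly; I expect this to yield the same three factors, but the algebraic identity is the shorter path.
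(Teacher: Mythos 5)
Your proposal is correct and follows essentially the same route as the paper: summing Lemma~\ref{lemma:countcell3} over $N$, regrouping via the same two subset-of-a-subset identities to extract the $N$-free factors ${n+3d-C-2\choose 3B-C}$ and ${B-1\choose d-1}$, and collapsing the remaining sum by Vandermonde's convolution (which the paper states in the equivalent form $\sum_i {a\choose i+j}{b\choose i+k}={a+b\choose b+j-k}$). The back-substitutions $m+B-d-N_2=n+2d-2B-2$ and $B-N_2=C-2B$ check out, so the argument is complete.
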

\begin{proof}
This number is obtained by summing the expression in Lemma~\ref{lemma:countcell3} over all possible values of $N$.

First rewrite the expression
\[
\begin{array}{rcl}
{N\choose 3B-C}{N-1\choose d-1}{n+3d-C-2\choose N}{B-1\choose N-1}&=&{n+3d-C-2\choose N}{N\choose 3B-C}         {B-1 \choose N-1} {N-1 \choose d-1}\\
&  = &  {n+3d-C-2 \choose 3B-C}{(n+3d-C-2)-(3B-C)\choose N-(3B-C)} {B-1\choose d-1}{(B-1)-(d-1)\choose (N-1)-(d-1)}  \\
& = &  {n+3d-C-2 \choose 3B-C} {n+3d-3B-2\choose N+C-3B}  {B-1\choose d-1} {B-d\choose N-d}.\\
\end{array}
\]
Using that $\sum_i {a \choose i+j}{b \choose i+k} = {a+b \choose b+j-k}$ it is possible to compute the sum
\[
\begin{array}{rcl}
 \sum_{N} {n+3d-C-2 \choose 3B-C} {n+3d-3B-2\choose N+C-3B}  {B-1\choose d-1} {B-d\choose N-d}
 & = &   {n+3d-C-2 \choose 3B-C}   {B-1\choose d-1}   \sum_{N} {n+3d-3B-2\choose N+C-3B} {B-d\choose N-d} \\
 & = &  {n+3d-C-2 \choose 3B-C}   {B-1\choose d-1}   { (n+3d-3B-2) + (B-d)   \choose (B - d)+(C-3B)-(- d)  }   \\
 & = &  {n+3d-C-2 \choose 3B-C}   {B-1\choose d-1}   { n+2d-2B-2\choose C-2B  }   \\
 & = &  {n+3d-C-2 \choose 3B-C}      { n+2d-2B-2\choose C-2B  }{B-1\choose d-1}.     \\
\end{array}
\]
\end{proof}

\begin{theorem}
The graded Betti number $\beta_{i,j}(S/I^d_{P_n})={n+3d-j-2\choose 2j-3i-3d+3}{n+4d+2i-2j-4\choose 2d+2i-j-2}{j-i-d\choose d-1}$
\end{theorem}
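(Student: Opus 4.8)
The plan is to read the graded Betti numbers straight off the minimal cellular resolution $\tilde{Y}^d_n$ produced by Theorem~\ref{thm:main}, using the enumeration of critical cells developed in the paragraphs preceding Proposition~\ref{prop:string} and the closed-form count in Lemma~\ref{lemma:countcell4}. The whole argument reduces to a single change of variables, so the real content is getting the bookkeeping between cell dimension and homological degree exactly right.

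First I would fix conventions. Since $\tilde{Y}^d_n$ supports a \emph{minimal} cellular resolution of $I^d_{P_n}$, the Betti number $\beta_{i,j}(S/I^d_{P_n})$ equals the number of critical cells whose monomial label has degree $j$ and whose dimension places them in homological degree $i$ of the resolution of the quotient. Under the standard indexing for cellular resolutions, a $p$-dimensional cell contributes to homological degree $p+1$ of the resolution of $S/I^d_{P_n}$: the empty face gives $S$ in degree $0$, the vertices give the generators of the ideal in degree $1$, the edges give degree $2$, and so on. Hence I must count critical cells of dimension $i-1$ and label degree $j$.

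Next I would invoke the bijection, established in the discussion after Theorem~\ref{thm:main}, between the critical cells of $\tilde{Y}^d_n$ and the critical inducing label maximal cells $\sigma$. For such a $\sigma$, the induced critical cell in $F_\sigma$ has dimension $C(\sigma)-B(\sigma)-d$ and its label has degree $C(\sigma)$. Matching these to the target bidegree gives the two equations $C(\sigma)=j$ and $C(\sigma)-B(\sigma)-d+1=i$, which solve uniquely to $C(\sigma)=j$ and $B(\sigma)=j-i-d+1$. Therefore $\beta_{i,j}(S/I^d_{P_n})$ is exactly the number of cells $\sigma$ with $(B(\sigma),C(\sigma))=(j-i-d+1,\,j)$.

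Finally I would substitute $(B,C)=(j-i-d+1,\,j)$ into the count of Lemma~\ref{lemma:countcell4}, namely ${n+3d-C-2\choose 3B-C}{n+2d-2B-2\choose C-2B}{B-1\choose d-1}$, and simplify the six entries: the first factor becomes ${n+3d-j-2\choose 2j-3i-3d+3}$ since $3B-C=2j-3i-3d+3$; the second becomes ${n+4d+2i-2j-4\choose 2d+2i-j-2}$ since $n+2d-2B-2=n+4d+2i-2j-4$ and $C-2B=2d+2i-j-2$; and the third becomes ${j-i-d\choose d-1}$ since $B-1=j-i-d$. This yields precisely the asserted formula. The substitution and arithmetic are routine, so I expect no obstacle there; the one step demanding care — and the part I would double-check — is the off-by-one shift relating the dimension $C(\sigma)-B(\sigma)-d$ of the critical cell to the homological index $i$ of the resolution of the quotient $S/I^d_{P_n}$, since this shift is exactly what aligns the exponents so that $B=j-i-d+1$ rather than, say, $B=j-i-d$.
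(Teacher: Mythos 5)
Your proposal is correct and follows essentially the same route as the paper: identify $j=C(\sigma)$ and $i=C(\sigma)-B(\sigma)-d+1$ from the dimension and label degree of the induced critical cell, then substitute $(B,C)=(j-i-d+1,\,j)$ into the count of Lemma~\ref{lemma:countcell4}. Your version merely spells out the substitution arithmetic and the $p\mapsto p+1$ homological shift that the paper leaves implicit.
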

\begin{proof}
Recall that the dimension of the critical cell induced by a label maximal cell is $C(\sigma)-B(\sigma)-d$ and the label is of degree $C(\sigma)$, these cells contribute to the Betti number $\beta_{i,j}(S/I^d_{P_n})$ with $i=C-B-d+1$ and $j=C$. The number of cells with given $B(\sigma)$ and $C(\sigma)$ are counted in Lemma~\ref{lemma:countcell4}.
\end{proof}

\end{document}